\renewcommand{\setminus}{{\smallsetminus}}
\newcommand{\bp}{\begin{pmatrix}}
\newcommand{\ep}{\end{pmatrix}}
\newcommand{\be}{\begin{equation}}
\newcommand{\ee}{\end{equation}}
\newcommand{\ol}[1]{\overline{#1}}
\newcommand{\PD}{\operatorname{PD}}
\newcommand{\smfrac}[2]{\mbox{\footnotesize$\displaystyle\frac{#1}{#2}$}} 
\numberwithin{equation}{section}
\theoremstyle{plain}
\newtheorem{theorem}[equation]{Theorem}
\newtheorem{lemma}[equation]{Lemma}
\newtheorem{proposition}[equation]{Proposition}
\theoremstyle{definition}
\newtheorem{example}[equation]{Example}
\newtheorem{definition}[equation]{Definition}
\theoremstyle{remark}
\newtheorem*{claim}{Claim}
\newtheorem{remark}[equation]{Remark}
\numberwithin{equation}{section}
\def\Z{\mathbb Z}
\def\Q{\mathbb Q}
\def\C{\mathbb C}
\def\wt#1{\widetilde{#1}}
\def\sm{\setminus}
\def\a{\alpha}
\def\toiso{\xrightarrow{\cong}}
\def\bp{\begin{pmatrix}}
\def\ep{\end{pmatrix}}
\def\ba{\begin{array}}
\def\ea{\end{array}}
\def\bn{\begin{enumerate}}
\def\en{\end{enumerate}}
\DeclareMathOperator\lk{lk}
\DeclareMathOperator\Tor{Tor}
\DeclareMathOperator\Hom{Hom}
\DeclareMathOperator\Id{Id}
\DeclareMathOperator\im{im}
\DeclareMathOperator\pt{pt}
\newcommand{\eps}{\varepsilon}
\newcommand{\transpose}{T}
\begin{document}

\title{Concordance invariance of Levine-Tristram signatures of links}

\author{Matthias Nagel}
\address{D\'epartement de Math\'ematiques,
Universit\'e du Qu\'ebec \`a Montr\'eal, Canada}
\email{nagel@cirget.ca}

\author{Mark Powell}
\address{D\'epartement de Math\'ematiques,
Universit\'e du Qu\'ebec \`a Montr\'eal, Canada}
\email{mark@cirget.ca}


\def\subjclassname{\textup{2010} Mathematics Subject Classification}
\expandafter\let\csname subjclassname@1991\endcsname=\subjclassname
\expandafter\let\csname subjclassname@2000\endcsname=\subjclassname
\subjclass{%
 57M25, 
 57M27, 
 57N70, 
}
\keywords{link concordance, Levine-Tristram signatures, nullity}

\begin{abstract}
We determine for which complex numbers on the unit circle the Levine-Tristram
signature and the nullity give rise to link concordance invariants.
\end{abstract}

\maketitle

\section{Introduction}

Let $L \subset S^3$ be an $m$-component oriented link in the 3-sphere.
Each connected, oriented Seifert surface~$F$ for $L$ has a bilinear Seifert form defined by
\begin{align*}
V \colon H_1(F;\Z) \times H_1(F;\Z) &\to \Z\\
(p[x],q[y]) &\mapsto pq\lk(x^-, y),
\end{align*}
where $p,q \in \Z$,
$x,y$ are simple closed curves on $F$ with associated homology classes $[x],[y]$, and $x^-$ is a push-off of $x$ in the negative normal direction of~$F$.
Given a unit modulus complex number $z \in S^1 \sm \{1\}$, choose a basis for $H_1(F;\Z)$
and define the hermitian matrix
\[B(z) := (1-z)V + (1-\ol{z})V^\transpose.\]
The \emph{Levine-Tristram signature} $\sigma_L(z)$ of $L$ at $z$ is defined to be
the signature of $B(z)$, namely the number of positive eigenvalues minus the number of negative eigenvalues.
The \emph{nullity}~$\eta_L(z)$
of $L$ at $z$ is the dimension of the null space of $B(z)$.
Both quantities can be shown to be invariants of the
$S$-equivalence class of the Seifert matrix, and are therefore
link invariants~\cite{Levine:1969-2,Tristram:1969-1}.

We say that two oriented $m$-component links $L$ and $J$ are \emph{concordant}
if there is a flat embedding into $S^3 \times I$ of a disjoint union of~$m$
annuli $A \subset S^3 \times I$, such that the oriented boundary of $A$
satisfies
\[ \partial A = -L \sqcup J \subset -S^3 \sqcup S^3 = \partial (S^3 \times I). \]
An $m$-component link $L$ is \emph{slice} if it is concordant to the $m$-component unlink.

The purpose of this paper is to answer the following question: \emph{for which values of $z$ are $\sigma_L(z)$ and $\eta_L(z)$ link concordance invariants?}
We work in the topological category, in order to obtain the strongest possible results.
In order to state our main theorem, we need one more definition.

\begin{definition}\label{definition:knotnullstelle}
A complex number~$z \in S^1 \sm \{1\}$ is a \emph{Knotennullstelle} if there exists a Laurent polynomial~$p(t) \in \Z[t,t^{-1}]$ with $p(1)=\pm 1$ and $p(z)=0$.
\end{definition}

Note that a complex number $z \in S^1 \sm \{1\}$ is a Knotennullstelle if and only if
there exists a knot~$K$ whose Alexander polynomial $\Delta_K$ has the
property that $\Delta_K(z) = 0$. This follows from the fact that all Laurent polynomials
$q \in \Z[t,t^{-1}]$ with $q(1) = \pm 1$ and $q(t) = q(t^{-1})$ can be realised
as Alexander polynomials of knots \cite[Theorem~8.13]{Burde-Zieschang:1985-1}.
Here is our main theorem.

\begin{theorem}\label{theorem:main-theorem}
The link invariants $\sigma_L(z)$ and $\eta_L(z)$ are concordance invariants
if and only if $z \in S^1 \sm \{1\}$ does not arise as a Knotennullstelle.
\end{theorem}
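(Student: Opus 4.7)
The plan is to prove the two implications separately.

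For the forward direction (non-Knotennullstelle $z$ implies concordance invariance), I would use a 4-dimensional argument. Given a concordance $A \subset S^3 \times I$ between $L$ and $J$ and connected Seifert surfaces $F_L, F_J$, push the interiors of $F_L$ and $F_J$ into collar 4-balls glued to $S^3 \times \{0\}$ and $S^3 \times \{1\}$, producing a closed connected oriented surface $\Sigma := -F_L \cup A \cup F_J$ of genus $g_L + g_J + m - 1$ in $S^4 = D^4 \cup_{S^3} (S^3 \times I) \cup_{S^3} D^4$. Set $W := S^4 \sm \nu \Sigma$, so $H_1(W;\Z) \cong \Z$ is generated by a meridian of $\Sigma$, and the character $\chi_z \colon H_1(W) \to U(1)$ sending this meridian to $z$ defines a local system $\C_z$. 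A Novikov-type additivity argument splitting $W$ along the link exteriors $X_L$ and $X_J$, together with the 4-dimensional interpretation of $\sigma_L(z)$ and $\eta_L(z)$ via pushed-in Seifert surfaces, should yield
\[
\sigma_{\chi_z}(W) = \sigma_L(z) - \sigma_J(z), \qquad \eta_{\chi_z}(W) = \eta_L(z) - \eta_J(z),
\]
reducing the invariance claim to showing these vanish.

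The heart of the proof, and the main obstacle, will be this vanishing. The plan is to simplify $\Sigma$ in $S^4$ by surgeries along essential simple closed curves on $\Sigma$. A curve $\gamma \subset \Sigma$ can be used to surger $\Sigma$ only if it bounds a topologically embedded disk in $W$; viewing $\gamma$ as a knot in $W$, the obstruction is controlled by an auxiliary Alexander polynomial of that knot, normalised to take value $\pm 1$ at $1$. The non-Knotennullstelle hypothesis on $z$ guarantees this Alexander polynomial is nonzero at $z$, enabling Freedman's topological disk embedding theorem for the good group $\Z = H_1(W)$ to produce the required disk. After finitely many such surgeries, $\Sigma$ is reduced to an unknotted sphere in $S^4$, for which the twisted intersection form on the complement vanishes, establishing the required formula.

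For the converse direction, given a Knotennullstelle $z$ with $p(t) \in \Z[t, t^{-1}]$ satisfying $p(z) = 0$ and $p(1) = \pm 1$, I would construct a topologically slice knot $K$ with $\eta_K(z) > 0$, so that $K$ is concordant to the unknot but has distinct nullity. By the standard realisation of palindromic Laurent polynomials as Alexander polynomials of knots, there exists a knot $K$ whose Alexander polynomial is the palindromic polynomial $p(t) p(t^{-1})$, which takes value $1$ at $1$; since $\Delta_K(z) = p(z)\, p(z^{-1}) = 0$, the nullity satisfies $\eta_K(z) > 0$. The main work is in establishing topological sliceness of $K$: this proceeds via Freedman's topological disk embedding theorem combined with an explicit infection or satellite construction on a topologically slice knot, exploiting the Fox-Milnor factorisation $\Delta_K = p \cdot \ol{p}$ and the metabolic structure of the Blanchfield pairing it implies. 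A refinement of this construction, arranging the Seifert form so that $B(z) = (1-z)V + (1-\ol{z})V^\transpose$ additionally has nonzero signature, shows that $\sigma_L(z)$ likewise fails to be a concordance invariant at $z$.
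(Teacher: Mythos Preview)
Your forward direction contains a genuine conceptual error in the ``heart of the proof.''  You propose to surger $\Sigma \subset S^4$ down to a sphere by finding embedded disks for essential curves $\gamma \subset \Sigma$, claiming that ``the obstruction is controlled by an auxiliary Alexander polynomial of that knot'' and that ``the non-Knotennullstelle hypothesis on $z$ guarantees this Alexander polynomial is nonzero at $z$, enabling Freedman's topological disk embedding theorem.''  This is not how Freedman's theorem works: the disk embedding theorem requires algebraically cancelling intersection points over a good fundamental group, and the Alexander-polynomial-one slicing result concerns the polynomial being \emph{identically} $1$, not nonzero at a single point $z$.  There is no mechanism by which the hypothesis ``$z$ is not a root of any polynomial augmenting to $\pm 1$'' produces embedded disks for arbitrary curves on an arbitrary closed surface in $S^4$.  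Indeed, the curves on $\Sigma$ coming from a Seifert surface of a non-slice knot cannot all bound such disks, yet $z$ is still not a Knotennullstelle.  Separately, the nullity formula $\eta_{\chi_z}(W) = \eta_L(z) - \eta_J(z)$ is problematic on its face, since the left side is nonnegative while the right side need not be.

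The paper's argument is entirely different and avoids any 4-dimensional surgery.  The key input is the algebraic Lemma~\ref{lemma:Z-hom-iso-implies-twisted-iso}: if $z$ is not a Knotennullstelle then evaluation at $z$ factors through the localisation $\Sigma^{-1}\Z[\Z]$ inverting all polynomials augmenting to $\pm 1$, and hence any $\Z$-homology equivalence of spaces over $S^1$ is automatically a $\C^\a$-homology equivalence.  Since a concordance exterior is a $\Z$-homology cobordism between $X_L$ and $X_J$, this immediately gives $\dim H_1(X_L;\C^\a) = \dim H_1(X_J;\C^\a)$, which is the nullity statement.  For the signature, the paper passes to the Seifert surgery $M_L$, expresses $\sigma_L(z)$ as a signature defect $\sigma(M_L,\phi,z) + \sigma(\operatorname{Lk}_L)$ of a bounding $4$-manifold, and then uses the same lemma on the induced $\Z$-homology cobordism between $M_L$ and $M_J$ to identify the twisted intersection forms.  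No Freedman-type input is needed anywhere.

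Your converse direction is closer in spirit to the paper's (which simply cites Cha--Livingston), but also has a gap: realising $\Delta_K = p(t)p(t^{-1})$ and observing a Fox--Milnor factorisation or a metabolic Blanchfield form does \emph{not} suffice to conclude $K$ is topologically slice; algebraically slice knots need not be slice.  The Cha--Livingston construction instead writes down an explicit Seifert matrix with a half-rank block of zeroes and prescribed determinant and signature at $z$; the zero block gives a genuine (ribbon) slice disk directly, with no appeal to Freedman.
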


\subsubsection*{Discussion of previously known results}

The first point to note is that, due to J.~C.~Cha and C.~Livingston~\cite{Cha-Livingston:2002-1}, when $z$ is a Knotennullstelle neither $\sigma_L(z)$ nor $\eta_L(z)$ are link concordance invariants.

\begin{theorem}[Cha, Livingston]\label{theorem:cha-livingston}
For any Knotennullstelle $z \in S^1 \sm\{1\}$, there exists a slice knot $K$ with $\sigma_K(z) \neq 0$ and $\eta_K(z) \neq 0$.
\end{theorem}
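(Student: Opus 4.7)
The plan is to construct, for each Knotennullstelle $z$, an explicit slice knot $K$ whose Seifert matrix $V$ has a specific block structure that makes the signature and nullity at $z$ visibly nonzero.

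First, I would fix $p(t) \in \Z[t,t^{-1}]$ with $p(1) = \pm 1$ and $p(z) = 0$; as $p$ has real coefficients, $p(\ol z) = 0$ as well. The target is a Seifert matrix in the algebraically slice block form
\[ V = \bp 0 & A \\ B & C \ep, \]
with $g \times g$ integer blocks, $g = \deg p$. A short block computation gives $\det(V - tV^\transpose) = (-1)^g \det(A - tB^\transpose)\det(B - tA^\transpose)$, so choosing $A, B$ with $\det(A - tB^\transpose) = p(t)$ forces $\Delta_K(t) \doteq p(t) p(t^{-1})$, while $\det(V - V^\transpose) = p(1)^2 = 1$ guarantees $V$ is a legitimate Seifert matrix of some genus $g$ knot.

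Next, using the identity $1 - \ol z = -\ol z(1-z)$ valid on $S^1$, I would rewrite the hermitian pairing as
\[ B(z) = \bp 0 & M \\ M^\ast & (2-2\Re z)\,C \ep, \qquad M := (1-z)(A - \ol z B^\transpose), \]
and note $\det M = (1-z)^g\, p(\ol z) = 0$, so $M$ is singular. The subspace $\C^g \oplus 0 \subset \C^{2g}$ is isotropic for $B(z)$, and its perp-mod-isotropic quotient is canonically $\ker M$ with induced form $D|_{\ker M}$, where $D = (2-2\Re z)C$. The standard hermitian block formula then yields
\[ \sigma_K(z) = \sigma(D|_{\ker M}), \qquad \eta_K(z) = \dim \ker M + \dim \ker(D|_{\ker M}). \]
Choosing $C = I_g$ makes $C|_{\ker M}$ positive definite and, since $2 - 2\Re z > 0$, gives $\sigma_K(z) = \eta_K(z) = \dim_\C \ker M \geq 1$.

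The hard part is the geometric realisation: producing a genuinely \emph{slice} knot $K$ (not merely an algebraically slice one) whose Seifert matrix is the prescribed $V$. The plan here is to construct $K$ as a ribbon knot by attaching $g$ bands to a disk in $B^4$, with the band data chosen to realise the prescribed linking numbers among the $a$- and $b$-cycles of the resulting genus-$g$ Seifert surface in $S^3$. The $a$-cycles compress into $B^4$ across the bands, so $K$ bounds a ribbon disk and is thus slice. Checking that arbitrary $A, B, C$ subject only to $\det(A - B^\transpose) = \pm 1$ can be realised through suitable band choices is the topological heart of the argument, and is precisely where Cha and Livingston's explicit construction enters.
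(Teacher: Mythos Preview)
Your proposal is correct and follows essentially the same approach as the paper's sketch, which is itself a summary of Cha--Livingston: build a Seifert matrix in block form $\begin{pmatrix} 0 & A \\ B & C \end{pmatrix}$ with $\det(A-tB^\transpose)\doteq p(t)$, check that $B(z)$ has nonzero signature and nullity at the root $z$, and realise $V$ as the Seifert matrix of a ribbon knot. You have supplied considerably more detail than the paper's two-sentence outline, and your block-diagonalisation yielding $\sigma_K(z)=\sigma(D|_{\ker M})$ and $\eta_K(z)=\dim\ker M+\dim\ker(D|_{\ker M})$ is correct; with $C=I_g$ both quantities equal $\dim\ker M\geq 1$. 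One small remark: the realisation of such block matrices as Seifert matrices of ribbon knots is a classical fact and does not require the specific Cha--Livingston construction---any such $V$ with $A-B^\transpose$ unimodular arises from a disc-band surface whose core curves for the first $g$ bands bound disjoint discs in $D^4$.
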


Given a polynomial $p(t)$ with $p(1)=\pm 1$ and $p(z)=0$, Cha and Livingston construct a matrix $V$ with $V-V^\transpose$ nonsingular, with $\det(tV-V^\transpose)$ equal to $p(t)p(t^{-1})$, such that the upper left half-size block contains only zeroes, and such that $\sigma(B(z)) \neq 0$.  Such a matrix can easily be realised as the Seifert matrix of a slice knot.

Some positive results on concordance invariance are also known.  For $z$ a
prime power root of unity, $\sigma_L(z)$ and $\eta_L(z)$ are concordance
invariants; see \cite{Murasugi:1965-1},~\cite{Tristram:1969-1} and~\cite{Kauffman:1978}.
D.~Cimasoni and V.~Florens~\cite{Cimasoni-Florens-2008} dealt with multivariable
signature and nullity concordance invariants, but again only at prime power
roots of unity.

For the signature and nullity at algebraic numbers away from prime power roots of unity,
we could not find any statements or results in the literature pertaining to our question.
Levine~\cite{Levine:2007-1} studied the question in terms of $\rho$-invariants, but only discussed concordance invariance away from the roots of the Alexander polynomial.

By changing the rules slightly, one can obtain a concordance invariant for all $z$.
The usual method is to define a function that is the average of the two one-sided limits of the Levine-Tristram signature function.
Let $z= e^{i\theta} \in S^1$, and consider:
  \[\ol{\sigma}_L(z) := \smfrac{1}{2}\Big(\lim_{\omega \to \theta_+} \sigma(B(e^{i\omega})) + \lim_{\omega \to \theta_-} \sigma(B(e^{i\omega}))\Big).\]
Since prime power roots of unity are dense in $S^1$, this averaged signature function yields a concordance invariant at every $z \in S^1$.  The earliest explicit observation of this that we could find was by Gordon in the survey article~\cite{Gordon:1978-1}.
One can also consider the averaged nullity function, to which similar remarks apply:
\[\ol{\eta}_L(z) := \smfrac{1}{2}\Big(\lim_{\omega \to \theta_+} \eta(B(e^{i\omega})) + \lim_{\omega \to \theta_-} \eta(B(e^{i\omega}))\Big).\]
In particular this is also a link concordance invariant.

Note that the function $\sigma_L\colon S^1 \sm \{1\} \to \Z$ is continuous away from roots of the Alexander polynomial $\det(tV-V^T)$ of $L$.  More generally one can consider the torsion Alexander polynomial $\Delta_L^{\Tor}$~of $L$, which by definition is the greatest common divisor of the $(n-r) \times (n-r)$ minors of $tV-V^T$, where $n$ is the size of $V$ and $r$ is the minimal nonnegative integer for which the set of minors contains a nonzero polynomial.
The function $\sigma_L$ is continuous away from the roots of the torsion Alexander polynomial~$\Delta_L^{\Tor}$, by
\cite[Theorem~2.1]{Gilmer-Livingston-2015} (their $A_L$ is our $\Delta_L^{\Tor}$).

Thus if $z$ is not a root of the torsion Alexander polynomial of \emph{any} link, the signature cannot jump at that value, and the signature function $\sigma_L(z)$ equals the averaged signature function $\ol{\sigma}_L(z)$ there.  Since the averaged function is known to be a concordance invariant, the non-averaged function is also an invariant when $z$ is not the root of any link's Alexander polynomial.  The excitement happens when $z$ is the root of the Alexander polynomial of some link, but is not the root of an Alexander polynomial of any knot.  The averaged and non-averaged signature functions can differ at such~$z$, but nevertheless \emph{both} are concordance invariants.  In Section~\ref{section:application} we will give an example which illustrates this difference, and gives an instance where the non-averaged function is more powerful.  Similar examples were given in \cite{Gilmer-Livingston-2015}, but only with jumps occurring at prime power roots of unity.

Finally we remark that our proof of Theorem~\ref{theorem:main-theorem} covers the previously known cases of prime power roots of unity and transcendental numbers, as well as the new cases.

\subsubsection*{Organisation of the paper}
The rest of the paper is organised as follows.
In Section~\ref{section:application}, we give an example of two links that are not concordant, where we use the signature and nullity functions at a root of their Alexander polynomials, which is not a prime power root of unity, to detect this fact.
Section~\ref{section:concordance-invariance-nullity} proves that the nullity is a concordance invariant, and the corresponding fact for signatures is proven in Sections~\ref{section:identification-signatures} and~\ref{section:conc-invariance-signature}.

\subsubsection*{Acknowledgements}
We thank Enrico Toffoli for pointing out a mistake in Lemma~\ref{lemma:identification} in a previous version.
We thank Stefan Friedl, Pat Gilmer, Chuck Livingston and Andrew
Ranicki for helpful discussions.
In particular~\cite{Gilmer-Livingston-2015}
inspired the question that led to this paper.
We also thank the referee for helpful feedback.
M.\ Powell is supported by an
NSERC Discovery grant. M.\ Nagel is supported by a CIRGET Postdoctoral
Fellowship.

\section{An application}\label{section:application}

In the introduction, for a link~$L$ we defined the signature function~$\sigma_L(z)$ and the
nullity function~$\eta_L(z)$, for each $z \in S^1\sm\{1\}$.
From the characterisation in Theorem~\ref{theorem:main-theorem}, one
easily finds new values~$z$ for which it was not previously known that
$\sigma(z)$ and $\eta(z)$ are concordance invariants.
In Proposition~\ref{prop:PropertiesOfTheLinks}, by exhibiting the obligatory
explicit example, we show that these values give obstructions to concordance
that are independent from previously known obstructions coming from the
signature and nullity functions.
We finish the section by constructing, in Proposition~\ref{prop:UniversalExample},
a family of such examples for any algebraic number on $S^1$.

Before the construction, we collect some facts on the set of roots of Alexander polynomials
of links. We say that a complex number~$z \in S^1 \sm \{1\}$ is a \emph{Linknullstelle} if $z$ is a root
 of a non-vanishing single variable Alexander polynomial of some link.
We have the following inclusions:
\[ \ba{rclcl} \left\{\mbox{Knotennullstellen}\right\}
&\subset&
\left\{\mbox{Linknullstellen}\right\}
&\subset &
S^1 \sm \{1\}  \\
& &\hspace{3em}\cup & & \\
&& \left\{\ba{c}\mbox{prime power}\\\mbox{roots of $1$}\ea\right\} &&
\ea \]
We will see that these inclusions are strict. The two subsets of the set of Linknullstellen are disjoint, since no prime power root of unity can be a root of a polynomial that augments to $\pm 1$, because the corresponding cyclotomic polynomial augments to the prime.
Moreover, the union of the Knotennullstellen and the prime power roots of unity is not exhaustive.

\begin{lemma}\label{lem:Nullstellen}~
\begin{enumerate}
\item\label{item:nullstellen-lemma-one} The set of Linknullstellen coincides with the set of algebraic numbers in $S^1 \setminus \{1\}$.
\item\label{item:nullstellen-lemma-two} The number~$z_0 = \frac{3+4i}{5} \in S^1$ is an algebraic number, which is neither a Knotennullstelle nor
a root of unity.
\end{enumerate}
\end{lemma}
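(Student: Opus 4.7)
The plan is to treat the two parts of the lemma separately.

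For part (\ref{item:nullstellen-lemma-one}), one direction is immediate: every link Alexander polynomial lies in $\Z[t,t^{-1}]$, so its roots are algebraic over $\Q$, giving $\{\text{Linknullstellen}\} \subseteq \{\text{algebraic numbers on } S^1 \sm \{1\}\}$. For the reverse inclusion, I start with an algebraic $z \in S^1 \sm \{1\}$ and take its minimal primitive integer polynomial $p(t) \in \Z[t]$. Because $|z|=1$, the complex conjugate $\overline{z} = 1/z$ is also a root of $p$, and a standard comparison of leading and constant coefficients (using irreducibility together with primitivity) then forces $p(t) = \pm t^{\deg p} p(t^{-1})$. The polynomial $\Delta(t) := (1-t) p(t)$ inherits this symmetry and is divisible by $(1-t)$, which are exactly the conditions that characterise single-variable Alexander polynomials of $2$-component links. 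I then invoke a standard realisation theorem (as in Kawauchi's survey) to produce such a link $L$ with $\Delta_L(t) \doteq (1-t) p(t)$, whence $\Delta_L(z) = 0$ and $z$ is a Linknullstelle.

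For part (\ref{item:nullstellen-lemma-two}), three things need to be verified for $z_0 = (3+4i)/5$. First, $z_0 + \overline{z_0} = 6/5$ and $z_0 \overline{z_0} = 1$, so $z_0$ satisfies $5t^2 - 6t + 5 = 0$. This polynomial is primitive and, having discriminant $-64 < 0$, irreducible over $\Q$; in particular $z_0$ is algebraic and $5t^2 - 6t + 5$ is its minimal integer polynomial. Second, to rule out $z_0$ being a root of unity, I invoke Niven's classical theorem: if $e^{i\theta}$ is a root of unity and $\cos \theta \in \Q$, then $\cos \theta \in \{0, \pm 1/2, \pm 1\}$; since $\cos(\arg z_0) = 3/5$ is not in this list, $z_0$ is not a root of unity. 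Third, for the Knotennullstelle obstruction: if $p(t) \in \Z[t,t^{-1}]$ satisfies $p(z_0) = 0$, then clearing negative powers of $t$ and applying Gauss's lemma, the primitivity of $5t^2 - 6t + 5$ forces $p(t) = (5t^2 - 6t + 5)\, q(t)$ for some $q(t) \in \Z[t,t^{-1}]$. Evaluating at $t=1$ gives $p(1) = 4\,q(1) \in 4\Z$, which excludes $p(1) = \pm 1$.

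The main obstacle is the realisation step in part (\ref{item:nullstellen-lemma-one}): producing a genuine $2$-component link whose Alexander polynomial is $(1-t)p(t)$ up to units. My plan is to cite an existing realisation theorem in the literature; if the precise statement we need is not available off the shelf, the fallback is to construct a Seifert matrix $V$ with $\det(tV - V^{\transpose}) \doteq (1-t)p(t)$ of the standard shape corresponding to a Seifert surface of a $2$-component link with linking number zero, and then realise $V$ geometrically via a banded-disks construction. A minor sanity check is that $(1-t)p(t)$ is nonzero, so the resulting link has \emph{non-vanishing} Alexander polynomial as demanded by the definition of Linknullstelle.
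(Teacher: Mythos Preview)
Your argument is correct, and for part~(\ref{item:nullstellen-lemma-two}) it is essentially the paper's argument: the paper also shows that $5t^2-6t+5$ is the minimal integer polynomial of $z_0$, rules out roots of unity (via cyclotomic polynomials rather than your Niven's theorem, but the content is the same), and then uses irreducibility and $p(1)=4$ to block the Knotennullstelle condition exactly as you do via Gauss's lemma.

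For part~(\ref{item:nullstellen-lemma-one}) your route genuinely differs from the paper's. The paper does \emph{not} exploit symmetry of the minimal polynomial; instead it starts from an arbitrary $p\in\Z[t]$ with $p(z)=0$, symmetrises by hand to $q(t)=(t-1)^3 p(t)p(t^{-1})$, and then realises $q$ by passing through the two-variable Alexander polynomial: it lifts $p$ to $P(x,y)$ with $P(t,t)=p(t)$, invokes Bailey's theorem (via Hillman) to realise $(x-1)(y-1)P(x,y)P(x^{-1},y^{-1})$ as the multivariable polynomial of a $2$-component link of linking number zero, and specialises. Your approach is more direct: the observation that $\bar z=1/z$ forces the minimal primitive polynomial itself to satisfy $p(t)=t^{\deg p}p(t^{-1})$ lets you skip the symmetrisation step entirely, and then Hosokawa's one-variable realisation theorem (any symmetric Laurent polynomial is the reduced Alexander polynomial of some $2$-component link) finishes the job without the detour through two variables. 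One small correction to your fallback: the resulting link will \emph{not} have linking number zero, since $\nabla_L(1)=p(1)\neq 0$ (indeed $p(1)=0$ would force $z=1$); the paper's link has linking number zero precisely because it builds in the extra factors of $(t-1)$, which you do not need.
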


\begin{proof}
Let $z \in S^1 \sm \{1\}$ be an algebraic number, so that $p(z)=0$ for some $p \in \Z[t]$.  Let
\[q(t):=(t-1)^3 p(t) p(t^{-1}) \in \Z[t,t^{-1}].\]
We claim that there is a link $L$ with single variable Alexander polynomial $\Delta_L(t) = q(t)$.
Choose a $2$-variable polynomial $P(x,y) \in \Z[x^{\pm 1},y^{\pm 1}]$ with $P(t,t)=p(t)$.  Let \[Q(x,y) := (x-1)(y-1)P(x,y)P(x^{-1},y^{-1}).\]
A corollary~\cite[Corollary~8.4.1]{Hillman:2012-1-second-ed} to Bailey's
theorem~\cite{Bailey-1977} states that any polynomial $Q(x,y)$ in
$\Z[x^{\pm 1},y^{\pm 1}]$, with $Q = \ol{Q}$ up to multiplication by $\pm x^k y^{\ell}$,
and such that $(x-1)(y-1)$ divides $Q$, is the Alexander polynomial of some
$2$-component link of linking number zero.
Thus there exists a $2$-component link $L$ with $2$-variable Alexander polynomial $Q(x,y)$.

The single variable Alexander
polynomial $\Delta_L(t)$ is obtained from the $2$-variable Alexander polynomial of
a $2$-component link $Q(x,y)$ as $(t-1)Q(t,t)$ \cite[Remark 9.18]{Burde-Zieschang:1985-1}.
But
\[(t-1)Q(t,t) = (t-1)^3P(t,t)P(t^{-1},t^{-1}) = (t-1)^3 p(t) p(t^{-1}) = q(t).\]
This completes the proof of the claim and therefore of (\ref{item:nullstellen-lemma-one}): the set of Linknullstellen is the set of
algebraic numbers lying on $S^1 \sm \{1\}$.


For (\ref{item:nullstellen-lemma-two}), first observe that the complex number~$z_0 := \frac{3+4i}{5}$ has unit modulus and that $z_0$ is a zero of the polynomial
\[ p(t) := 5 t^2 -6 t + 5,\]
and therefore is an algebraic number. Note that no cyclotomic polynomial divides the polynomial~$p(t)$. This
can be checked for the first six by hand, and the rest have degree larger than $2$.
From Abel's irreducibility theorem, we learn that $z_0$ is not a zero of a cyclotomic polynomial
and thus is not a root of unity.  Since $p(1)=4$ and $p(t)$ is irreducible over $\Z[t]$,
$z_0$ is not the root of any polynomial that augments to $\pm 1$. As a result,
$z_0$ is not a Knotennullstelle.
\end{proof}

Next we describe links~$L$ and $L'$ whose signature and nullity functions are
equal everywhere on $S^1\sm \{1\}$ apart from at $z_0$, which will be a root of
the Alexander polynomials of $L$ and $L'$.
We find these links by realising suitable Seifert forms.
\begin{example}\label{example:SingleJump}
Consider the following Seifert matrix:
\[ V := \begin{pmatrix}
0 &  0 &  0 &  0 & 1 & 0 &  0 & 0 \\
0 &  0 &  0 &  0 & 0 & 5 & -4 & 4 \\
0 &  0 &  0 &  0 & 0 & 0 &  1 & 0 \\
0 &  0 &  0 &  0 & 0 & 0 &  0 & 1 \\
1 & -1 &  0 &  0 & 0 & 0 &  0 & 0 \\
0 &  5 & -1 &  0 & 0 & 0 &  0 & 0 \\
0 & -4 &  1 & -1 & 0 & 0 &  0 & 0 \\
0 &  4 &  0 &  1 & 0 & 0 &  0 & 1
\end{pmatrix}. \]
This matrix represents the Seifert form of the $3$-component link~$L$ given
by the boundary of the Seifert surface shown in Figure~\ref{Fig:Realisation}.
\begin{figure}[h]
  \begin{center}
	\begin{tikzpicture}
	\node[anchor=south west,inner sep=0] at (0,0) {\includegraphics[width=0.95\textwidth]{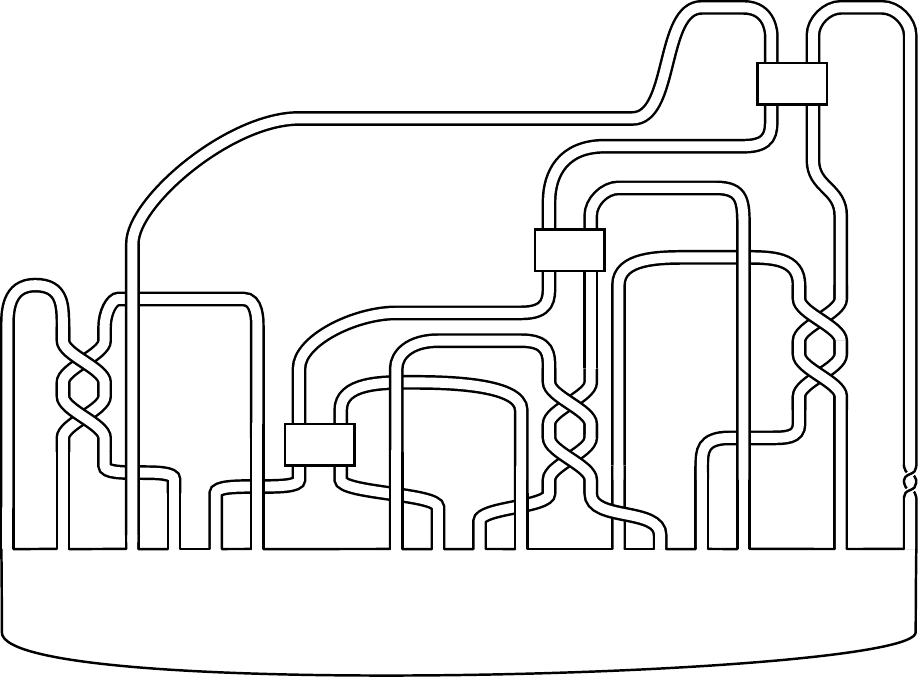}};
\node at (0.3,1){$e_1$};
	\node[fill=white] at (1.75,1){$e_2$};
	\node[fill=white] at (2.25,1){$e_5$};
	\node[fill=white] at (0.25*21,1){$e_3$};
	\node[fill=white] at (0.25*23,1){$e_6$};
	\node[fill=white] at (0.25*33,1){$e_4$};
	\node[fill=white] at (0.25*35,1){$e_7$};
	\node[fill=white] at (0.25*44,1){$e_8$};
\node at (0.25*17-0.05,3.05){$-5$};
	\node at (0.25*27+0.70,5.6){$4$};
	\node at (0.25*41+0.15,7.8){$-4$};
	\end{tikzpicture}
  \end{center}
  \caption{Realisation of the Seifert form $V$.}
  \label{Fig:Realisation}
\end{figure}
As usual, a box with $n \in \Z$ inside denotes $n$ full right-handed twists between two bands,
made without introducing any twists into the individual bands.  To see what we mean, observe that there are three
instances in the figure of one full left-handed twist, otherwise known as $-1$
full right-handed twists.  The left-most twist is between the bands labelled
$e_1$ and $e_5$.  To obtain the Seifert matrix, note that the beginning of each
of the eight bands is labelled $e_i$, for $i=1,\dots,8$.  Orient the bands
clockwise and compute using $V_{ij} = \lk(e_i^-, e_j)$, where the picture is
understood to show the positive side of the Seifert surface.

Produce a link~$L'$ from~$L$ by removing the single twist in the right-most band, labelled $e_8$ in Figure~\ref{Fig:Realisation}.
This gives rise to a Seifert matrix $V'$ for~$L'$ which is the same as~$V$,
except that the bottom right entry is a~$0$ instead of a~$1$.

Consider the sesquilinear form $B$ over $\Q[t^{\pm 1}]$ determined by the matrix
\[ (1-t) V + (1-t^{-1}) V^\transpose.\]
The form~$B$ splits into a direct sum of sesquilinear forms. For a Laurent polynomial
$p(t) \in \Q[t^{\pm 1}]$, abbreviate the form given by the $2\times 2$ matrix
\[ \begin{pmatrix}0 & p(t)\\ p(t^{-1}) & 0\end{pmatrix}. \]
by $\left[ p(t) \right]$.
A calculation shows that $B$ is congruent to the form
\[ [t-1] \oplus [t-1] \oplus [t-1] \oplus
\begin{pmatrix}
0 &  q(t)\\
q(t^{-1}) & -t^{-1} + 2 -t
\end{pmatrix}, \]
where the polynomial $q(t)$ is
\[ q(t)=t^{-1} \cdot (t -1 )^{3} \cdot (5t^2 - 6t + 5). \]

On the other hand the corresponding sesquilinear form $B'$ over $\Q[t^{\pm 1}]$ for $L'$ is equivalent to
\[ [t-1] \oplus [t-1] \oplus [t-1] \oplus [q(t)].\]
\end{example}

\begin{proposition}\label{prop:PropertiesOfTheLinks}
Let $z_0$ denote the algebraic number $\frac{3+4i}{5}$.
The links $L$ and $L'$ constructed in Example~\ref{example:SingleJump} have the following properties.
\begin{enumerate}
\item If $z$ is a root of unity, then
$\sigma_{L}(z) = \sigma_{L'}(z)$  and  $\eta_L(z) = \eta_{L'}(z)$.
\item The averaged signature and nullity functions agree, i.e.\
\[ \ol{\sigma}_L (z) = \ol{\sigma}_{L'} (z) \text{ and } \ol{\eta}_L (z) = \ol{\eta}_{L'} (z) \]
for all $z \in S^1\sm\{1\}$.
\item The signatures and nullities of $L$ and $L'$ at $z_0$ differ:
\[ \sigma_{L}(z_0) \neq \sigma_{L'}(z_0) \text{ and } \eta_{L}(z_0) \neq \eta_{L'}(z_0),\]
and so $L$ is not concordant to $L'$.
\end{enumerate}
\end{proposition}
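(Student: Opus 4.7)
The plan is to read off $\sigma_L(z)$, $\sigma_{L'}(z)$, $\eta_L(z)$, $\eta_{L'}(z)$ directly from the sesquilinear congruences displayed in Example~\ref{example:SingleJump}, and then invoke Theorem~\ref{theorem:main-theorem} to convert the numerical discrepancy at $z_0$ into non-concordance of $L$ and $L'$.

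First I would analyse each summand separately. For $p(t) \in \Z[t,t^{-1}]$ and $z \in S^1$, the form $[p(t)]$ becomes the hermitian matrix $\begin{pmatrix} 0 & p(z) \\ \overline{p(z)} & 0 \end{pmatrix}$, with eigenvalues $\pm|p(z)|$; it contributes signature $0$ always, with nullity $2$ when $p(z)=0$ and nullity $0$ otherwise. The remaining $2\times 2$ block in the decomposition of $B$ evaluated at $z \in S^1\sm\{1\}$ has bottom-right entry $-z^{-1}+2-z = 2-2\operatorname{Re}(z)>0$ and determinant $-|q(z)|^2 \leq 0$. When $q(z)\neq 0$ the two eigenvalues have opposite sign, giving signature $0$ and nullity $0$; when $q(z)=0$ the matrix has rank one with a positive diagonal entry, giving signature $1$ and nullity $1$.

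Assembling the summands on $S^1\sm\{1\}$, I obtain $\sigma_{L'}\equiv 0$, with $\eta_{L'}(z)=2$ at the roots of $q$ on the unit circle other than $1$ and $\eta_{L'}(z)=0$ elsewhere, whereas $\sigma_L(z)=\eta_L(z)=1$ at those roots and both vanish elsewhere. The roots of $q(t)=t^{-1}(t-1)^3(5t^2-6t+5)$ on $S^1\sm\{1\}$ are exactly $z_0$ and $\overline{z_0}$, and by Lemma~\ref{lem:Nullstellen}(\ref{item:nullstellen-lemma-two}) neither of these is a root of unity; this yields~(1) at once. For~(2), all four functions $\sigma_L,\sigma_{L'},\eta_L,\eta_{L'}$ vanish on a punctured neighbourhood of every point of $S^1\sm\{1\}$, so the one-sided limits match between $L$ and $L'$ everywhere and the averaged functions coincide, being identically zero on $S^1\sm\{1\}$.

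For~(3), a direct substitution at $z_0$ gives $\sigma_L(z_0)=\eta_L(z_0)=1$ while $\sigma_{L'}(z_0)=0$ and $\eta_{L'}(z_0)=2$. Since $z_0$ is not a Knotennullstelle by Lemma~\ref{lem:Nullstellen}(\ref{item:nullstellen-lemma-two}), Theorem~\ref{theorem:main-theorem} ensures that $\sigma_\bullet(z_0)$ and $\eta_\bullet(z_0)$ are link-concordance invariants, and so these inequalities rule out any concordance between $L$ and $L'$. The only step that requires genuine work is verifying the two sesquilinear-form congruences displayed in Example~\ref{example:SingleJump}, which is a lengthy but entirely routine elementary-matrix reduction over $\Q[t^{\pm 1}]$; once those are taken for granted, everything above is bookkeeping.
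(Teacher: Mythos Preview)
Your proof is correct and follows essentially the same approach as the paper: both analyse the block decomposition from Example~\ref{example:SingleJump}, observe that away from the roots $z_0,\overline{z_0}$ of $q$ on $S^1\sm\{1\}$ the forms $B(z)$ and $B'(z)$ are nonsingular with vanishing signature (the paper phrases this as ``metabolic'', you compute eigenvalues directly), and then evaluate at $z_0$ to get $\sigma_L(z_0)=\eta_L(z_0)=1$ versus $\sigma_{L'}(z_0)=0$, $\eta_{L'}(z_0)=2$, invoking Lemma~\ref{lem:Nullstellen} and Theorem~\ref{theorem:main-theorem} to conclude. Your treatment is slightly more explicit in the block-by-block bookkeeping, but the content is the same.
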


\begin{proof}
Note that for any $z \in \C \setminus \{0,1\}$ with $q(z) \neq 0$,
the form $B(z)$ over $\C$ is nonsingular and metabolic. The same holds for $B'(z)$.
This implies that the signatures $\operatorname{sign} B(z)$ and $\operatorname{sign} B'(z)$ vanish.  The nullities $\eta_L(z),\eta_{L'}(z)$ are also both zero.
Since the roots of $q(z)$ are exactly
$z_0$ and $\ol{z_0}$, which are not roots of unity by Lemma~\ref{lem:Nullstellen},
we obtain the first statement of the proposition.
We also see that the averaged signature function on $S^1 \sm\{1\}$ and the averaged nullity function are identically zero, so we obtain the second statement.

From Lemma~\ref{lem:Nullstellen}, we know that $z_0 := \frac{3+4i}{5}$
is not a Knotennullstelle, and
\[ \operatorname{sign} B(z_0) = \operatorname{sign}
	\begin{pmatrix}
	0 & 0\\
	0 & \frac{4}{5}
	\end{pmatrix} = 1. \]
Thus $\sigma_L(z_0)=1 = \eta_L(z_0)$.  On the other hand, for $L'$ the matrix $B'(z_0)$ is a $2\times 2$ zero matrix, so we have that $\sigma_{L'}(z_0)=0$ and $\eta_{L'}(z_0)=2$.
Both signatures and the nullities at $z_0$ differ, so $L$ and $L'$ are not concordant by Theorem~\ref{theorem:main-theorem}.
\end{proof}

\begin{remark}
  One can also see that $L$ and $L'$ are not concordant using linking numbers.
\end{remark}

A more systematic study of the construction of the example above leads to the following
proposition.
\begin{proposition}\label{prop:UniversalExample}
Let $q(t)\in \Z[t]$ be a polynomial. Then there exists a natural number $k > 0$
and a link~$L$ with
Alexander polynomial $\Delta_L(t) \overset{.}{=} q(t^{-1}) q(t) (t-1)^k$ up to
units in $\Z[t,t^{-1}]$ such that
\begin{enumerate}
\item the form~$B(z)$ of $L$ is metabolic and nonsingular for all $z \in S^1\sm \{1\}$ which are not roots of $q(t)$, so $\sigma_L(z)=0$.
\item if $z_0\neq 1$ is a root of $q(t)$ of unit modulus, then $\sigma_L(z_0) \neq 0$.
\end{enumerate}
\end{proposition}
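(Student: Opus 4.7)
The plan is to generalize the explicit construction of Example~\ref{example:SingleJump}, which treats $q(t)=5t^2-6t+5$, to an arbitrary $q\in\Z[t]$. Concretely, I would build an integer Seifert matrix $V$ in block form
\[
V = \begin{pmatrix} 0 & A \\ B & C \end{pmatrix},
\]
where $A$ and $B$ are ``companion-like'' matrices whose entries encode the coefficients of $q$, and $C$ is a rank-one block with a single nonzero entry in the lower-right corner. The design criterion is that the sesquilinear form $B(t) := (1-t)V + (1-t^{-1})V^\transpose$ be congruent over $\Z[t,t^{-1}]$ to
\[
\underbrace{[t-1]\oplus\cdots\oplus [t-1]}_{j\text{ copies}} \;\oplus\; \begin{pmatrix} 0 & q(t) \\ q(t^{-1}) & 2-t-t^{-1} \end{pmatrix},
\]
where $[t-1]$ denotes the $2\times 2$ hermitian block with off-diagonal entries $t-1$ and $t^{-1}-1$. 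The block $C$ is precisely what forces the entry $2-t-t^{-1}$ to appear; without it, $B(t)$ would be hyperbolic and therefore metabolic everywhere. Verifying the decomposition amounts to a sequence of elementary hermitian row and column operations over $\Z[t,t^{-1}]$, in direct analogy with the calculation performed in the example.

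Granted this decomposition, the signature analysis in the proposition is immediate. For $z\in S^1\setminus\{1\}$ with $q(z)\neq 0$, each $[t-1]$ block has determinant $-|1-z|^2<0$ and hence signature $0$, while the final block has determinant $-|q(z)|^2<0$ and also signature $0$; thus $B(z)$ is nonsingular and metabolic, so $\sigma_L(z)=0$, giving conclusion~(1). At a root $z_0\in S^1\setminus\{1\}$ of $q$, the final block degenerates to the $2\times 2$ matrix whose only nonzero entry is the $(2,2)$-entry $2-2\Re z_0>0$, contributing signature $+1$; combined with the vanishing contributions from the hyperbolic blocks this gives $\sigma_L(z_0)=+1\neq 0$, which is~(2).

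It remains to realize $V$ geometrically, following Figure~\ref{Fig:Realisation}: one builds a connected Seifert surface from a disc with bands, with twists and linking numbers dictated by $V_{ij}=\lk(e_i^-,e_j)$. This is possible provided that the skew-symmetric matrix $V-V^\transpose$ is congruent over $\Z$ to the standard form with $g$ hyperbolic blocks and $m-1$ zero eigenvalues; when this holds, $g$ is the genus of the Seifert surface and $m$ the number of components of $L$. A direct determinant computation from the hermitian decomposition then yields $\Delta_L(t)\doteq q(t)q(t^{-1})(t-1)^k$ for a positive integer $k$ determined by $j$, $n$, and $m$. The main obstacle is Step~1 itself: one needs an explicit integer matrix $V$ that simultaneously admits the prescribed $\Z[t,t^{-1}]$-hermitian decomposition \emph{and} has $V-V^\transpose$ in the right form for geometric realization. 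The $8\times 8$ matrix of the example serves as a blueprint, but the combinatorial bookkeeping to extend it to arbitrary $q$ of degree $n$—while keeping all entries integral and the reduction carried out over $\Z[t,t^{-1}]$ rather than over $\Q(t)$—is the genuine technical content of the proof.
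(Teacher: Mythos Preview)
Your strategy matches the paper's exactly: a block Seifert matrix $V=\begin{pmatrix}0 & *\\ * & *\end{pmatrix}$ with a single nonzero entry in the lower-right block, designed so that $B(z)$ is metabolic and nonsingular off the roots of $q$ and has signature $+1$ at those roots. Your signature analysis in the second paragraph is correct and is precisely how the paper finishes.

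The gap is the one you yourself flag: you have not actually produced the matrix. You describe Step~1 as ``the main obstacle'' and ``the genuine technical content,'' which is an honest acknowledgment that the proof is not yet a proof. The paper's device for carrying this out is worth noting, because it is not quite the direct generalization of Example~\ref{example:SingleJump} that you envisage. Rather than aiming for the target form with $q(t)$ itself in the off-diagonal, the paper passes to the variable $y=t-1$: it writes down an explicit $(n+1)\times(n+1)$ integer matrix $P(y)$ of companion type, reduces it by elementary operations over $\Z[y^{\pm 1}]$ to a matrix with a single interesting entry $p(y)$, and observes that the coefficients $a_k$ can be chosen so that $p(t-1)=q(t)(t-1)^k$ for suitable $k$. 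The Seifert matrix $V$ is then built from the blocks of $P$ evaluated suitably, and one checks directly that $V-V^\transpose$ is the intersection form of a genus-$n$ surface with three boundary components. The substitution $y=t-1$ is what makes the integrality and the $\Z[t,t^{-1}]$-congruence bookkeeping transparent; trying to hit $q(t)$ on the nose, as your target decomposition suggests, is harder to arrange. (Indeed, even in Example~\ref{example:SingleJump} the off-diagonal entry is $t^{-1}(t-1)^3(5t^2-6t+5)$, not $5t^2-6t+5$ itself.)
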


The proof of this proposition is based on ideas from~\cite{Cha-Livingston:2002-1}.

\begin{proof}
Consider the size $n+1$ square matrix $P$ with entries in $\Z[y]$ given by
\[ P(y):=
	\begin{pmatrix}
	1 & y & 0 && & ya_1\\
	0 & 1 & y && & ya_2\\
	\vdots && \ddots & \ddots & &\vdots\\
	&&& 1 & y    & ya_{n-1}\\
	0 &&& 0 & 1    & ya_n\\
	y & 0 & \ldots &0 & 0 & 0
	\end{pmatrix},\]
with $a_i$ integers.  Over $\Z[y^{\pm 1}]$, the matrix~$P$ can be transformed via invertible row operations
and column operations to
the matrix
\[ A(y) = \begin{pmatrix}
	1 & 0 & 0 && & p(y)\\
	0 & 1 & 0 && & 0\\
	\vdots && \ddots & \ddots & & \vdots\\
	&&& 1 & 0    & 0\\
	0 &&& 0 & 1    & 0\\
	y & 0 & \ldots &0 & 0 & 0
	\end{pmatrix}\]
with $p(y) = b_1(y)$ where $b_k(y) \in \Z[y]$ is defined by the
recursion~$b_{k-1}(y) := y\cdot (a_k - b_k(y))$ and $b_n(y) := y \cdot a_n$.
Notice that, up to units, we can arrange $p(y)$ to be any polynomial in
$\Z[y^{\pm 1}]$
by choosing $n$ sufficiently large and then suitable entries $a_k\in \Z$.  That is, multiply by $y^{\ell}$ so that the lowest order term is the linear term, and take $(-1)^i a_i$ to be the coefficient of $y^{i-1}$ in $p(y)$, for $i=2,\dots,n+1$.

Pick the entries $a_k$ so that if we evaluate $p(y)$ at $(t-1)$ we get the equality~$p(t-1) = q(t) (t-1)^k$
for a suitable integer~$k$. Now consider the block matrix
\[ V := \begin{pmatrix}
	0 & V^u\\
	V^b & Q(1)
	\end{pmatrix} \]
with
\[ V^u = \begin{pmatrix}
	0 & 1 & 0 && & a_1\\
	0 & 0 & 1 && & a_2\\
	\vdots && \ddots & \ddots & &\vdots\\
	&&& 0 & 1    & a_{n-1}\\
	0 &&& 0 & 0    & a_n\\
	1 & 0  & \dots &0 & 0 & 0
	\end{pmatrix} \quad V^b = \begin{pmatrix}
	-1 & 0 & 0 && & 1\\
	1 & -1 & 0 && & 0\\
	0 & 1 & \ddots & \ddots & &\vdots\\
	\vdots && \ddots & -1 & 0    & 0\\
	0 &&& 1 & -1    & 0\\
	a_1 & a_2 & \dots & a_{n-1} & a_{n} & 0
	\end{pmatrix}
 \]
and \[ Q(y) = \begin{pmatrix} 	0 	& \ldots & 0 & 0\\
			  	\vdots 	& \ddots & \vdots & \vdots\\
				0 	& \dots & 0 	  & 0\\
				0 	& \dots	 & 0 	 & y
\end{pmatrix}.
\]
The matrix~$V$ is the Seifert matrix of a link as $V-V^\transpose$ is the intersection form
of a genus $n$ oriented surface with three boundary components. Let $L$ be such a link, necessarily a $3$-component link.
We remark in passing that the matrix~$V$ from Example~\ref{example:SingleJump} is not a special case of the matrix $V$ defined in the current proof, although it is close to being so.

Recall that $B(z) = (1-z) V + (1-\ol z) V^\transpose = (\ol z-1) \cdot \left( zV -  V^\transpose \right)$.
The matrix~$V$ was constructed in such a way that
\[ B(z) =  \begin{pmatrix} 0& (\ol z-1) \cdot P(z-1)\\ (z-1) \cdot P^\transpose(\ol z-1) &  Q(-\ol z - z+2) \end{pmatrix}. \]

Using the transformations associated to the above row and column operations, we see that
$B(z)$ is congruent to
\[ B(z) \sim \begin{pmatrix} 0& (\ol z-1) \cdot A(z-1)\\ (z-1) \cdot A^\transpose(\ol z-1) &  Q(-\ol z - z+2) \end{pmatrix}. \]
Note that the matrix $Q$ is unchanged by this congruency, because in the corresponding sequence of row and column operations, it never happens that the last row or column is added to another row or column.

We complete the proof of the proposition by showing that indeed the link~$L$ has the required properties.
If $z\in S^1 \sm \{1\}$ is not a zero of $q(t)$, then also $p(z) \neq 0$. Consequently,
the form~$B(z)$ is nonsingular and metabolic. On the other hand, if $z \in S^1 \sm \{1\}$
is a root of $q(t)$, then also $p(z) = 0$. In this case the Levine-Tristram form $B(z)$ is a sum
\[ B(z) = M \oplus \begin{pmatrix} 0 & 0\\0 & -\ol z - z+2 \end{pmatrix} \]
with $M$ nonsingular and metabolic. Thus $\sigma_L(z) = 1$.
\end{proof}

\begin{remark}
Replace $Q(1)$ with $Q(0)$ in the construction of the matrix $V$ in the proof of
Proposition~\ref{prop:UniversalExample}, to obtain a matrix $V'$.
Using the same construction as in Example~\ref{example:SingleJump}, the matrices $V$ and $V'$
give rise to links~$L$ and $L'$ respectively, such that
\[ \eta_L(z) = \eta_{L'}(z) \text{ and } \sigma_L(z) = \sigma_{L'}(z)\]
for every $z \in S^1$ that is not a root of $q(t)$.
Analogously to Example~\ref{example:SingleJump}, $L$ and $L'$ are not concordant, but again this
can also be seen using linking numbers.  This leads to the following question.
Does there exist a pair of links $L$ and $L'$, with the same pairwise linking numbers,
whose signature and nullity
functions can only tell the concordance classes of the links apart at an
isolated algebraic numbers $z, \bar z \in S^1$ that are roots of the Alexander
polynomial $\Delta_L = \Delta_{L'}$.

\end{remark}

\section{Twisted homology and integral homology isomorphisms}\label{section:twisted-homology-key-lemma}

Now we begin working towards the proof of Theorem~\ref{theorem:main-theorem}.
Fix $z \in S^1 \sm\{1\}$ to be a unit complex number that is not the root of any
polynomial $p(t) \in \Z[t]$ with $p(1)=\pm 1$ i.e.\ $z$ is not a
Knotennullstelle.  We denote the classifying space for the integers~$\Z$ by $B\Z$, which has
the homotopy type of the circle~$S^1$.
Given a CW complex $X$, a map $X \to B\Z$ induces a homomorphism
$\pi_1(X) \to \Z$.  This determines a representation
\[\a \colon \Z[\pi_1(X)] \to \Z[\Z] \xrightarrow{\operatorname{ev}_z} \C\]
of the group ring of the fundamental group of $X$,
 with respect to which we can consider the twisted homology
 \[H_i(X;\C^\a) := H_i\left(\C \otimes_{\Z[\pi_1(X)]} C_*(\wt{X}) \right).\]

Let $\Sigma \subset \Z[\Z]$ be the multiplicative subset of polynomials that
map to $\pm 1$ under the augmentation $\eps \colon \Z[\Z] \to \Z$, that is
$\Sigma = \{p(t) \in \Z[\Z] \, |\, |p(1)|=1 \}$. By inverting this subset we obtain
the localisation~$\Sigma^{-1}\Z[\Z]$ of the Laurent polynomial ring.
This has the following properties.
\begin{enumerate}[(i)]
  \item The canonical map $\Z[\Z] \to \Sigma^{-1}\Z[\Z]$ is an inclusion, since $\Z[\Z]$ is an integral domain.
  \item\label{item:property-cohn-local-two} For any $\Z[\Z]$-module morphism $f \colon M \to N$ of finitely generated free $\Z[\Z]$-modules such that the augmentation \[\eps(f) = \Id \otimes f \colon \Z \otimes_{\Z[\Z]} M \to \Z \otimes_{\Z[\Z]} N\] is an isomorphism, we have that \[\Id \otimes f \colon \Sigma^{-1}\Z[\Z] \otimes_{\Z[\Z]} M \to \Sigma^{-1}\Z[\Z] \otimes_{\Z[\Z]} N\] is also an isomorphism.
\end{enumerate}

The second property can be reduced to the following.
Assume $A$ is a matrix over $\Z[\Z]$ such that $\eps(A)$ is invertible. Consequently,
we have $\det( \eps(A)) = \pm 1$ and as $\eps( \det(A) ) = \det( \eps(A))$, we deduce that
$\det(A) \in \Sigma$. Therefore, the determinant~$\det(A)$ is invertible in the localisation~$\Sigma^{-1}\Z[\Z]$ and so is the matrix~$A$ over $\Sigma^{-1}\Z[\Z]$.

As the unit modulus complex number~$z$ that we have fixed is not a Knotennullstelle, the
representation $\a$ defined above factors through the localisation,
i.e.\ evaluation at~$z$ determines a ring homomorphism $\Sigma^{-1}\Z[\Z] \xrightarrow{\Sigma^{-1}\operatorname{ev}_z} \C$
such that the ring homomorphisms $\Z[\Z] \xrightarrow{\operatorname{ev}_z} \C$ and
\[\Z[\Z] \to \Sigma^{-1}\Z[\Z] \xrightarrow{\Sigma^{-1}\operatorname{ev}_z} \C\] coincide.

\begin{lemma}\label{lemma:Z-hom-iso-implies-twisted-iso}
  Let $f \colon X \to Y$ be a map of finite CW complexes over $S^1$, that is there are maps $g \colon X \to S^1$ and $h \colon Y \to S^1$ such that $h \circ f = g$, and suppose that
  \[f_* \colon H_i(X;\Z) \toiso H_i(Y;\Z)\]
  is an isomorphism for all $i$.  Then
  \[f_* \colon H_i(X;\C^\a) \toiso H_i(Y;\C^\a)\]
is also an isomorphism for all $i$.
\end{lemma}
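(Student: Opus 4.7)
The plan is to reduce the statement to a purely algebraic assertion about chain complexes over $\Z[\Z]$, and then exploit the special structure of the set $\Sigma$ via a Nakayama-style argument. Lifting along the maps to $S^1 \simeq B\Z$, one obtains infinite cyclic covers $\wt X$ and $\wt Y$ whose cellular chain complexes are bounded complexes of finitely generated free $\Z[\Z]$-modules, together with a $\Z[\Z]$-equivariant chain map $\wt f_* \colon C_*(\wt X) \to C_*(\wt Y)$. Let $D_*$ denote its algebraic mapping cone. For any ring $R$ equipped with a ring homomorphism from $\Z[\Z]$, the induced map $f_*$ is an isomorphism on $H_*(-;R)$ if and only if $R \otimes_{\Z[\Z]} D_*$ is acyclic. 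Thus the hypothesis reads ``$\Z \otimes_{\Z[\Z]} D_*$ is acyclic'' and the conclusion reads ``$\C^{\a} \otimes_{\Z[\Z]} D_*$ is acyclic''.

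Since $\a$ factors as $\Z[\Z] \to \Sigma^{-1}\Z[\Z] \xrightarrow{\operatorname{ev}_z} \C$, it will suffice to show that $\Sigma^{-1}\Z[\Z] \otimes_{\Z[\Z]} D_*$ is acyclic: as a bounded complex of free $\Sigma^{-1}\Z[\Z]$-modules, it will then be chain contractible, and the functor $\C \otimes_{\Sigma^{-1}\Z[\Z]} -$ preserves chain contractibility and hence acyclicity. Flatness of the localisation $\Z[\Z] \to \Sigma^{-1}\Z[\Z]$ gives $H_i(\Sigma^{-1}\Z[\Z] \otimes_{\Z[\Z]} D_*) = \Sigma^{-1}\Z[\Z] \otimes_{\Z[\Z]} H_i(D_*)$, so the task reduces to showing that each $H_i(D_*)$ is $\Sigma$-torsion.

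To achieve this I would tensor the short exact sequence $0 \to \Z[\Z] \xrightarrow{t-1} \Z[\Z] \to \Z \to 0$ with the free complex $D_*$; the resulting long exact sequence in homology, together with the acyclicity of $\Z \otimes_{\Z[\Z]} D_*$, shows that multiplication by $t-1$ is an automorphism of every $H_i(D_*)$. Since $\Z[\Z]$ is Noetherian, $H_i(D_*)$ is finitely generated, and a Cayley--Hamilton / determinant trick (in the same spirit as the justification of property (ii) given in the excerpt) applied to generators together with the surjection by $t-1$ produces, for each $i$, an element $p \in 1 + (t-1)\Z[\Z] \subset \Sigma$ annihilating $H_i(D_*)$. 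Hence $H_i(D_*)$ is $\Sigma$-torsion, as required.

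The main obstacle is this last Nakayama-style step: it is precisely there that the special shape of $\Sigma$—polynomials augmenting to $\pm 1$—is used, and therefore where the hypothesis that $z$ is not a Knotennullstelle enters essentially (through the factorisation of $\a$ via $\Sigma^{-1}\Z[\Z]$). The remainder of the argument is routine manipulation with algebraic mapping cones, flat base change, and the fact that bounded acyclic complexes of projectives are contractible.
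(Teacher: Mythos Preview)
Your argument is correct, and the overall architecture---pass to the mapping cone over $\Z[\Z]$, show it becomes acyclic after inverting $\Sigma$, then tensor up to $\C^{\a}$---matches the paper's. The difference lies in how acyclicity over $\Sigma^{-1}\Z[\Z]$ is established. The paper works directly at the chain level: it takes a chain contraction $s$ of the mapping cone over $\Z$ (which exists since the cone is a bounded acyclic complex of free abelian groups), lifts it arbitrarily to a map $\wt s$ over $\Z[\Z]$, and observes that $d\wt s + \wt s d$ is a chain endomorphism augmenting to the identity; property~(ii) then makes it invertible over $\Sigma^{-1}\Z[\Z]$, so $\wt s$ yields a genuine contraction there. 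You instead stay at the homology level: flatness of localisation reduces the question to showing each $H_i(D_*)$ is $\Sigma$-torsion, and you get this from the long exact sequence associated to $0\to\Z[\Z]\xrightarrow{t-1}\Z[\Z]\to\Z\to 0$ together with Nakayama's lemma over the commutative Noetherian ring $\Z[\Z]$. Both routes hinge on the same algebraic phenomenon (a matrix or endomorphism augmenting to an isomorphism becomes invertible after inverting $\Sigma$; equivalently, $1+(t-1)\Z[\Z]\subset\Sigma$), but your packaging via Nakayama is arguably more conceptual and avoids manipulating explicit chain contractions, while the paper's lift-and-invert argument is more self-contained and does not invoke Noetherianity or flatness of localisation.
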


The lemma follows~\cite[Proposition~2.10]{Cochran-Orr-Teichner:1999-1}.
The difference is that we use the well-known refinement that one does not need
to invert all nonzero elements.  We give the proof for the convenience of the
reader.  This is adapted from the proof given in~\cite{Friedl-Powell:2010-1}.

\begin{proof}
The algebraic mapping cone $D_*:= \mathscr{C}(f_* \colon C_*(X;\Z) \to C_*(Y;\Z))$ has vanishing homology, and comprises finitely generated free $\Z$-modules.  Therefore it is chain contractible.  We claim that the chain contraction can be lifted to a chain contraction for $\mathscr{C}(f_* \colon C_*(X;\Sigma^{-1}\Z[\Z]) \to C_*(Y;\Sigma^{-1}\Z[\Z]))$, the mapping cone over the localisation $\Sigma^{-1}\Z[\Z]$.

To see this, let $s \colon D_* \to D_{*+1}$ be a chain contraction, that is we have that $\partial s_i + s_{i-1}\partial  = \Id_{D_i}$ for each~$i$.
Define $\wt{D}_*:= \mathscr{C}(f_* \colon C_*(X;\Z[\Z]) \to C_*(Y;\Z[\Z]))$ and consider
$\eps \colon \wt{D}_* \to D_* = \Z \otimes_{\Z[\Z]} \wt{D}_*$, induced
by the augmentation map. Denote
$E_* := \mathscr{C}(f_* \colon C_*(X;\Sigma^{-1}\Z[\Z]) \to C_*(Y;\Sigma^{-1}\Z[\Z]))$ and note that there is an inclusion $\wt{D}_i \to E_i = \Sigma^{-1}\Z[\Z] \otimes_{\Z[\Z]} \wt{D}_i$, induced by the localisation.
Lift $s$ to a map $\wt{s} \colon \wt{D}_{*} \to \wt{D}_{*+1}$, as in the next diagram
\[\xymatrix  @C+1cm {\wt{D}_* \ar@{-->}[r]^{\wt{s}} \ar[d]^{\eps} & \wt{D}_{*+1} \ar[d]^{\eps} \\ D_* \ar[r]^{s} & D_{*+1}.
}\]
The lifts exist since all modules are free and $\eps$ is surjective. But then we have that
\[ f:= d\wt{s} + \wt{s}d \colon \wt{D}_* \to \wt{D}_{*}\]
is a morphism of free $\Z[\Z]$-modules whose augmentation $\eps(f)$ is an isomorphism.  Thus by property (\ref{item:property-cohn-local-two}) of $\Sigma^{-1}\Z[\Z]$,  $f$ is also an isomorphism over $\Sigma^{-1}\Z[\Z]$, and so $\wt{s}$ determines a chain contraction for $E_*$.
We therefore have that $E_* = C_*(Y,X;\Sigma^{-1}\Z[\Z]) \simeq 0$ as claimed.

Next, tensor $E_*$ with $\C$ over the representation $\a$, to get that \[\C^\a \otimes_{\Sigma^{-1}\Z[\Z]} C_*(Y,X;\Sigma^{-1}\Z[\Z]) = C_*(Y,X;\C^\a) \simeq 0.\]
Thus $H_i(Y,X;\C^\a)=0$ for all $i$ and so $f_* \colon H_i(X;\C^\a) \toiso H_i(Y;\C^\a)$
is an isomorphism for all $i$ as desired.
\end{proof}

\section{Concordance invariance of the nullity}\label{section:concordance-invariance-nullity}

In this section we show concordance invariance of the nullity function away from the set of Knotennullstellen.

\begin{definition}[Homology cobordism]
A cobordism $(W^{n+1};M^n,N^n)$ between $n$-manifolds $M$ and $N$ is said to be a \emph{$\Z$-homology cobordism}
if the inclusion induced maps $H_i(M;\Z) \to H_i(W;\Z)$ and $H_i(N;\Z) \to H_i(W;\Z)$ are isomorphisms for all~$i \in \Z$.
\end{definition}

\begin{theorem}\label{theorem:nullity-invariance}
Suppose that oriented $m$-component links $L$ and $J$ are concordant and
that $z \in S^1 \sm \{1\}$ is not a Knotennullstelle.
Then $\eta_L(z) = \eta_J(z)$.
\end{theorem}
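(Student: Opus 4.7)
The plan is to identify $\eta_L(z)$ with the dimension of a twisted homology group of the link exterior $X_L := S^3 \setminus \nu(L)$, and then to use that a concordance from $L$ to $J$ produces a $\Z$-homology cobordism between $X_L$ and $X_J$ over $S^1$, so that Lemma~\ref{lemma:Z-hom-iso-implies-twisted-iso} can be applied.

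Let $\alpha \colon \pi_1(X_L) \to \Z \to \C^\ast$ denote the representation sending every meridian of $L$ to $z$. The first step is to establish an identification of the form
\[
\eta_L(z) = \dim_\C H_1(X_L; \C^\alpha) - c(m),
\]
where $c(m)$ is a correction depending only on the number of components $m$ (which is the same for $L$ and $J$). I would prove this by picking a connected Seifert surface $F$ for $L$, cutting $X_L$ along $F$ to obtain a cobordism $Y_L$ between two copies of $F$, and assembling the infinite cyclic cover $\wt{X}_L$ from $\Z$-many copies of $Y_L$. The associated Mayer--Vietoris sequence, regarded over $\Z[\Z]$ and then tensored with $\C$ via $\alpha$, has its key connecting homomorphism given by the matrix $B(z) = (1-z)V + (1-\ol{z})V^\transpose$, and a direct rank count in low degrees yields the claimed formula.

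Next, given a concordance $A \subset S^3 \times I$ with $\partial A = -L \sqcup J$, set $W := (S^3 \times I) \setminus \nu(A)$. An Alexander duality argument, using that $A$ is a disjoint union of $m$ annuli with trivial normal bundle, shows that the inclusions $X_L \hookrightarrow W$ and $X_J \hookrightarrow W$ induce isomorphisms on $H_\ast(-;\Z)$ in every degree, so that $W$ is a $\Z$-homology cobordism between the link exteriors. The representation $\alpha$ extends naturally to $W$ via the ``sum of meridians'' map $H_1(W;\Z) \cong \Z^m \to \Z$, so all three spaces sit compatibly as finite CW complexes over $S^1$. Lemma~\ref{lemma:Z-hom-iso-implies-twisted-iso} then yields
\[
\dim_\C H_1(X_L; \C^\alpha) = \dim_\C H_1(W; \C^\alpha) = \dim_\C H_1(X_J; \C^\alpha),
\]
and combining this with the identification from step one gives $\eta_L(z) = \eta_J(z)$.

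The main obstacle is step one: the Seifert-surface computation of twisted homology is classical for knots but needs some care for links to correctly pin down the correction term $c(m)$, and one must check that the analysis works over the localisation $\Sigma^{-1}\Z[\Z]$ so that Lemma~\ref{lemma:Z-hom-iso-implies-twisted-iso} can be applied at the end. Once this identification is secured, the concordance-invariance statement is a direct consequence of Lemma~\ref{lemma:Z-hom-iso-implies-twisted-iso} together with the standard fact that a concordance exterior is a $\Z$-homology cobordism.
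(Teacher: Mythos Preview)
Your approach is essentially the same as the paper's: identify $\eta_L(z)$ with $\dim_\C H_1(X_L;\C^\alpha)$ via the Mayer--Vietoris presentation coming from cutting along a Seifert surface, observe that the concordance exterior is a $\Z$-homology cobordism, and apply Lemma~\ref{lemma:Z-hom-iso-implies-twisted-iso}. Two small points: the paper shows that in fact $c(m)=0$, since $zV-V^\transpose$ (hence $B(z)$, after multiplying by the nonzero scalar $\ol{z}-1$) is a square presentation matrix for $H_1(X_L;\C^\alpha)$ once one checks $\Tor_1^{\C[t^{\pm 1}]}(H_0(\ol{X}_L;\C),\C^\alpha)=0$ using $z\neq 1$; and your worry about making step one work over $\Sigma^{-1}\Z[\Z]$ is unnecessary, as the localisation enters only inside the proof of Lemma~\ref{lemma:Z-hom-iso-implies-twisted-iso}, not in its application.
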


\begin{proof}
As in the statement suppose that $z \in S^1 \sm \{1\}$ is not a Knotennullstelle.
Denote the exterior of the link~$L$ by $X_L := S^3 \sm \nu L$.  As above,
let $V$ be a matrix representing the Seifert form of $L$ with respect to a
Seifert surface $F$ and a basis for $H_1(F;\Z)$.

We assert that the matrix $zV-V^\transpose$ presents the homology
$H_1(X_L;\C^\a)$.  This can be seen as follows.  Consider the infinite cyclic cover $\ol{X}_L$ corresponding to the kernel of the homomorphism $\pi_1(X_L) \to \Z$, defined as the composition of the abelianisation $\pi_1(X_L) \to H_1(X_L;\Z) \cong \Z^m$, followed by the map $(x_1,\dots,x_m) \mapsto \sum_{i=1}^m x_i$ i.e.\ each oriented meridian is sent to $1 \in \Z$.
A decomposition of $\ol{X}_L$ and the associated Mayer-Vietoris sequence~\cite[Theorem 6.5]{Lickorish:1997-1}
give rise the following presentation
\[  \C[t^{\pm 1}] \otimes_\C H_1(F;\C) \xrightarrow{ tV - V^\transpose }
\C[t^{\pm 1}] \otimes_\C H_1(F;\C)^\vee \rightarrow   H_1( \ol{X}_L ;\C) \rightarrow 0, \]
where $H_1(F;\C)^\vee$ is the dual module $\Hom_{\C}(H_1(F;\C),\C)$.
Apply the right-exact functor~$\C^\alpha \otimes_{\C[t^{\pm 1}]}$ to this sequence, to obtain the sequence
\[  \C^{\a} \otimes_\C H_1(F;\C) \xrightarrow{ zV - V^\transpose }
\C^{\a} \otimes_\C H_1(F;\C)^\vee \rightarrow  \C^\alpha \otimes_{\C[t^{\pm 1}]} H_1(\ol{X}_L ;\C)
\rightarrow 0. \]
As $H_0(\ol{X}_L; \C) \cong \C$, we have that $\Tor_1^{\C[t^{\pm 1}]}(H_0(\ol{X}_L;\C),\C^{\a})=0$ by
the projective resolution
\[ 0 \to \C[t^{\pm 1}] \overset{\cdot (1-t)}{\to} \C[t^{\pm 1}]\to \C \to 0 \]
and $z \neq 1$.
Since $\C[t^{\pm 1}]$ is a principal ideal domain, we can apply the universal coefficient theorem for homology to deduce that $\C^\alpha \otimes_{\C[t^{\pm 1}]} H_1(\ol{X}_L ;\C) = H_1(X_L; \C^\alpha)$.  This completes the proof of the assertion that $zV-V^\transpose$  presents the homology $H_1(X_L;\C^\a)$.

Next observe that $(\ol{z} - 1)(zV-V^\transpose) = (1-z)V + (1-\ol{z})V^\transpose$
 presents the same module as $zV-V^\transpose$, since $\ol{z}-1$ is nonzero.  The dimension of $H_1(X_L;\C^\a)$ therefore coincides with the nullity $\eta_L(z)$, which is by definition the nullity of the matrix $(1-z)V + (1-\ol{z})V^{\transpose}$.

Now, let $A \subset S^3 \times I$ be a union of annuli giving a concordance
between $L$ and $J$, and let  $W := S^3 \times I \sm \nu A$.  Then $W$ is a
$\Z$-homology bordism between $X_L$ and $X_J$; this is a straightforward
computation with Mayer-Vietoris sequences or with Alexander duality; see for example \cite[Lemma~2.4]{Friedl-Powell-2014}.
Thus by two
applications of Lemma~\ref{lemma:Z-hom-iso-implies-twisted-iso}, with $Y= W$
and $X=X_L$ and $X=X_J$ respectively, we see that
$H_1(X_L;\C^\a) \cong H_1(W;\C^\a)\cong  H_1(X_J;\C^\a)$, and so the nullities of $L$ and $J$ agree.
We need that $z$ is not a Knotennullstelle in order to apply Lemma~\ref{lemma:Z-hom-iso-implies-twisted-iso}.
\end{proof}

\section{Identification of the signature with the signature of a 4-manifold}\label{section:identification-signatures}

In the proof of Theorem~\ref{theorem:nullity-invariance}, a key step was to reexpress
the nullity~$\eta(z)$ of the form~$B(z)$ as a topological invariant of a $3$-manifold,
and then to use the bordism constructed from a concordance to relate the invariants.
An analogous approach is used here to obtain the corresponding statement for the signature.
Everything in this section is independent of whether $z$ is a Knotennullstelle.

Recall that we fixed an oriented $m$-component link~$L \subset S^3$, and that we picked a connected Seifert surface~$F$ for~$L$.
Denote the link complement by $X_L := S^3 \sm \nu L$.
First note that the fundamental class~$[F] \in H_2(F,\partial F;\Z)$ of the Seifert surface~$F$ is independent of the choice of $F$.
This follows from the fact that its Poincar\'{e} dual is characterised as the unique cohomology
class~$\xi \in H^1(X_L;\Z)$ mapping each meridian~$\mu$ to $\xi(\mu) = 1$.

The boundary of $F \subset S^3 \sm \nu L$ is a collection of embedded curves
in the boundary tori that we refer to as the \emph{attaching curves}.
The attaching curves together with the meridians determine a framing of each boundary torus of $X_L$.
Also, this framing depends solely on $[F]$, since the connecting homomorphism of the pair
$(X_L, \partial X_L)$ maps $\partial [F] = [\partial F]$.

With respect to this framing, we can consider the Dehn filling of slope zero, resulting in the closed
$3$-manifold~$M_L$. By definition, to obtain $M_L$ attach  a disc to each of the attaching curves, and then
afterwards fill each of the resulting boundary spheres with a $3$-ball.

\begin{definition}
The framing of the boundary tori of $X_L$ constructed above is called
the \emph{Seifert framing}. The \emph{Seifert surgery} on $L$ is the $3$-manifold~$M_L$
constructed above.
\end{definition}

\begin{remark}
For links there is no reason for this framing to agree with the zero-framing of each individual component.
\end{remark}

Collapsing the complement of a tubular neighbourhood
of the Seifert surface~$F$ gives rise to map $S^3 \setminus \nu L \to S^1=B\Z$, which extends
to a map from the Seifert surgery $\phi \colon M_L \to B\Z$.
To see this in more detail, parametrise a regular neighbourhood of $F$ as $F
\times [-1,1]$, with $F$ as $F \times \{0\}$.  The intersection of this
parametrised neighbourhood with each component of $\partial F$ determines a
parametrised subset
$S^1 \times [-1,1] \subset S^1 \times S^1 \subseteq \partial F$.
Extend this to a subset $D^2 \times [-1,1] \subset D^2 \times
S^1$ for each of the Dehn filling solid tori $D^2 \times S^1$ in~$M_L$.  Now
define
\[\ba{rcl}
\phi \colon M_L &\to & S^1 = B\Z \\
x &\mapsto & \begin{cases}
  e^{\pi i t} & x= (f,t) \in \big( F \cup \bigsqcup^m D^2 \big) \times [-1,1]
\\ -1 & \text{ otherwise.}
\end{cases}
\ea\]

The map~$\phi$ classifies the image of the fundamental class of the capped-off Seifert
surface in $M_L$, in the sense that $[\phi]$ maps to $[F \cup \bigsqcup^m D^2]$
under $[M_L,S^1] \toiso H^1(M_L;\Z) \toiso H_2(M_L;\Z)$. Recall that
the homology class~$[F \cup \bigsqcup^m D^2] \in H_2(M_L; \Z)$
only depends on the isotopy class of $L$ and so also
the homotopy class of $\phi$
does not depend on the Seifert surface~$F$.
The manifold~$M_L$ together with the map~$\phi$ defines an element~$[(M_L, \phi)] \in \Omega_3(B\Z)$,
where $\Omega_k(X)$ denotes the bordism group of oriented,
topological $k$-dimensional manifolds with a map to $X$.
Recall that cobordism is a generalised homology theory
fulfilling the suspension axiom, see e.g.\ \cite[Chapter 21]{tomDieck08} and \cite[Section 14.4]{99May}.
As a consequence, we obtain
\[\wt{\Omega}_3(B\Z) = \wt{\Omega}_3(S^1) = \wt{\Omega}_3(\Sigma S^0) \cong \wt{\Omega}_2(S^0) = \Omega_2(\pt) =0.\]
Thus $\Omega_3(B\Z) \cong \Omega_3(\pt) = 0$ \cite{Rohlin53}.

The group $\Omega_3(B\Z) \cong \Omega_3 \oplus \Omega_2 = 0 \oplus 0 = 0$ is trivial,
and we can make use of this fact to define a signature defect invariant, as follows.

For any oriented $3$-manifold $M$ with a map $\phi \colon M \to B\Z$, we will define an
integer for each complex number $z \in S^1$.
Since $\Omega_3(B\Z) =0$, there exists a
$4$-manifold $W$ with boundary $M$ and a map $\Phi \colon W \to B\Z$ extending
the map $M \to B\Z$ on the boundary.  Similarly to before, an element $z \in S^1$ determines a representation
\[\a \colon \Z[\pi_1(W)] \xrightarrow{\Phi} \Z[\Z] \xrightarrow{t \mapsto z} \C.\]
Consider the twisted homology
$H_i(W;\C^\a)$, and consider the intersection form $\lambda_{\a}(W)$ on
the quotient $H_2(W;\C^\a)/\im H_2(M;\C^\a)$.
Define the promised integer
\[\sigma(M,\phi,z) := \sigma(\lambda_{\a}(W)) - \sigma(W),\]
where $\sigma(W)$ is the ordinary signature of the intersection form on $W$.

The proof of the following proposition is known for the coefficient system~$\Q(t)$, e.g.\ ~\cite{Powell-2016-signatures-4-genus}.
For the convenience of the reader, we sketch the key steps for an adaptation to $\C^\alpha$.

\begin{proposition}~
  \begin{enumerate}[(i)]
    \item The intersection form $\lambda_{\a}(W)$ is nonsingular.
    \item The signature defect $\sigma(M,\phi,z)$ is independent of the choice of $4$-manifold~$W$.
  \end{enumerate}
\end{proposition}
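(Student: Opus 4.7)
The plan is to handle both claims via Poincar\'e-Lefschetz duality with local coefficients in $\C^\alpha$ over the field $\C$, together with the bordism computation $\Omega_3(B\Z) = 0$ already used to produce $W$, and its four-dimensional analogue $\Omega_4(B\Z) \cong \Z$. Since $z \in S^1$, the representation $\alpha$ is unitary, so every intersection pairing on $H_2(-;\C^\alpha)$ that appears is hermitian and has a well-defined signature.

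For part (i), I would first observe that $\im(i_* \colon H_2(M;\C^\alpha) \to H_2(W;\C^\alpha))$ is contained in the radical of $\lambda_\alpha(W)$: a boundary-parallel class can be isotoped into $M$ and off any interior cycle. For the converse inclusion, identify the adjoint of $\lambda_\alpha(W)$ with the composition
\[ H_2(W;\C^\alpha) \xrightarrow{j_*} H_2(W,M;\C^\alpha) \xrightarrow[\cong]{\PD} H^2(W;\C^\alpha) \xrightarrow[\cong]{\kappa} \Hom_\C(H_2(W;\C^\alpha),\C), \]
where the last two arrows are isomorphisms because $\C$ is a field (so that Kronecker evaluation has no $\operatorname{Ext}$ correction and twisted Poincar\'e-Lefschetz duality is perfect). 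By exactness of the long exact sequence of $(W,M)$ in twisted homology, the kernel of $j_*$ equals $\im i_*$, so the radical of $\lambda_\alpha(W)$ is exactly $\im i_*$ and $\lambda_\alpha(W)$ descends to a nonsingular hermitian form on the quotient.

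For part (ii), given two $4$-manifolds $(W_1,\Phi_1)$ and $(W_2,\Phi_2)$ filling $(M,\phi)$, glue them along $M$ with opposite orientations to obtain a closed $4$-manifold $N := W_1 \cup_M (-W_2)$ with an extended map $\Phi \colon N \to B\Z$. Novikov additivity gives $\sigma(N) = \sigma(W_1) - \sigma(W_2)$, and I would establish its twisted analogue $\sigma(\lambda_\alpha(N)) = \sigma(\lambda_\alpha(W_1)) - \sigma(\lambda_\alpha(W_2))$ by Mayer-Vietoris in $\C^\alpha$-coefficients, using that classes in $H_2(N;\C^\alpha)$ originating from $H_1(M;\C^\alpha)$ via the connecting map lie in the radical by the same pushing argument as in (i). Subtracting reduces the problem to showing $\sigma(\lambda_\alpha(N)) - \sigma(N) = 0$ for every closed $(N,\Phi) \in \Omega_4(B\Z)$. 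Both $\sigma$ and $\sigma\circ\lambda_\alpha$ descend to homomorphisms $\Omega_4(B\Z) \to \Z$: for any compatible null-bordism $(Z,\Psi)$, the kernel of $H_2(N;\C^\alpha) \to H_2(Z;\C^\alpha)$ is a Lagrangian subspace of $\lambda_\alpha(N)$, so its signature vanishes. By the suspension axiom $\Omega_4(B\Z) \cong \Omega_4 \oplus \Omega_3 \cong \Z$, generated by $[\mathbb{CP}^2, c]$ with $c$ constant; on this generator $\alpha$ factors through the trivial group, so $\C^\alpha = \C$ and both signatures equal $1$. Hence the difference is the zero homomorphism, and $\sigma(M,\phi,z)$ is independent of $W$. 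The main obstacle I foresee is verifying the Lagrangian property in the twisted setting---showing that the kernel is its own orthogonal complement and is half-dimensional---which is precisely where perfectness of Poincar\'e-Lefschetz duality with $\C^\alpha$-coefficients over the null-bordism $Z$ is essential.
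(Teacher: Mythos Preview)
Your proposal is correct and follows essentially the same route as the paper. Part~(i) is identical: both you and the paper factor the adjoint of $\lambda_\alpha$ through $H_2(W,M;\C^\alpha)$, use that Poincar\'e--Lefschetz duality and the Kronecker evaluation are isomorphisms over the field~$\C$, and read off from exactness that $\ker j_* = \im i_*$ is the radical. For part~(ii), the paper glues $W^+ \cup_M \overline{W^-}$, invokes Novikov additivity, observes that the signature defect descends to a bordism invariant $\sigma_z \colon \Omega_4(B\Z) \to \Z$, and then uses $\wt{\Omega}_4(S^1)=0$ to replace $(U,\Phi)$ by a bordant pair with null-homotopic~$\Phi'$, where $\C^\alpha$ becomes the untwisted~$\C$ and the defect visibly vanishes. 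Your version computes $\Omega_4(B\Z)\cong\Z$ and checks the generator $[\mathbb{CP}^2,c]$ instead; this is the same computation phrased slightly differently. The one place you are more explicit than the paper is in sketching the Lagrangian argument for bordism invariance of $\sigma(\lambda_\alpha(-))$, which the paper outsources to a reference; your caution about verifying half-dimensionality is well placed but standard over a field once twisted Poincar\'e--Lefschetz duality is in hand.
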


\begin{proof}
The long exact sequence of the pair~$(W, \partial W)=(W,M)$ gives rise to the following commutative diagram
\[\xymatrix{
\ldots \ar[r] & H_2(\partial W; \C^\alpha) \ar[r] & H_2(W; \C^\alpha) \ar[r] \ar[rdd]
	&  H_2(W, \partial W; \C^\alpha) \ar[d]^{\PD_W^{-1}} \ar[r] & \ldots \\
&&& H^2(W;\C^\alpha) \arrow[d]^{\kappa} & \\
&&& \left( H_2(W;\C^\alpha) \right)^\vee, &
}\]
where for a $\C$-module $P$ we denote its dual module by $P^\vee := \Hom_\C(P,\C)$.
Since Poincar\'{e}-Lefschetz duality~$\PD_W$ and the Kronecker pairing~$\kappa$ are isomorphisms,
we obtain an injective map $H_2(W;\C^\a)/ \im H_2(M;\C^\a) \to H_2(W;\C^\a)^\vee$.
This map descends to
\[ \lambda_\a \colon H_2(W;\C^\a)/ \im H_2(M;\C^\a)
	\to \left( H_2(W;\C^\a) / \im H_2(\partial W;\C^\a)\right)^\vee,\]
so that the diagram below commutes:
\[\xymatrix{
H_2(W;\C^\a)/ \im H_2(M;\C^\a) \ar[rd]_{\lambda_\a} \ar @{^{(}->}[r] &  H_2(W;\C^\a)^\vee \\
	& \left(H_2(W;\C^\a)/ \im H_2(\partial W;\C^\a) \right)^\vee. \ar[u] }.\]
Consequently, the form~$\lambda_\a$ is nondegenerate, and so it is nonsingular since
it is a form over the field~$\C$.

We proceed with the second statement of the proposition, namely independence of $\sigma(M, \phi, z)$ on the choice of $W$.
Suppose that we are given two $4$-manifolds $W^+, W^-$, both with boundary
$\partial W^\pm = M$, and a map $\Phi^\pm \colon W^\pm \to B\Z$ extending $\phi \colon M \to B\Z$.
Temporarily, define the signature defects arising from the
two choices to be
\[\sigma(W^{\pm},\Phi^{\pm},z) := \sigma(\lambda_{\a}(W^{\pm})) - \sigma(W^{\pm}).\]
We will show that $\sigma(W^+,\Phi^+,z) = \sigma(W^-,\Phi^-,z)$, and thus that $\sigma(M,\phi,z)$ is a well-defined integer, so our original notation was justified.

Glue $W^+$ and $\overline{W^-}$ together along~$M$, to obtain a closed manifold~$U$,
together with a map $\Phi \colon U \to B\Z$. By Novikov additivity, we learn that
\[ \sigma_z(U, \Phi) := \sigma(\lambda_\a(U)) - \sigma(U)
= \sigma(W^+, \Phi^+, z) - \sigma(W^-, \Phi^-, z).\]
This defect~$\sigma_z(U, \Phi)$ can be promoted to a bordism invariant
$\sigma_z \colon \Omega_4(B\Z) \to \Z$, see e.g.\ ~\cite[Proof of Lemma~3.2]{Powell-2016-signatures-4-genus}
 and replace $\Q(t)$ coefficients with $\C^\a$ coefficients.

\begin{claim}
The map $\sigma_z \colon \Omega_4(B\Z) \to \Z$ is the zero map.
\end{claim}

Let $U$ be a closed $4$-manifold together with a map $\Phi \colon U \rightarrow S^1$, representing an element of $\Omega_4(B\Z)$.
By the axioms of generalised homology theories, we have
\[\wt{\Omega}_4(S^1) = \wt{\Omega}_4(\Sigma S^0) \cong \wt{\Omega}_3(S^0) = \Omega_3(\pt) =0.\]
Thus an inclusion $\pt \to S^1$ induced an isomorphism $\Omega_4(\pt) \xrightarrow{\cong} \Omega_4(S^1)$.
So $(U,\Phi)$ is bordant over $S^1$ to a $4$-manifold $U'$ with a null-homotopic map $\Phi'$ to $S^1$. In this case the local coefficient system~$\C^\a$ is just the trivial representation~$\C$.
Consequently, we have $\lambda_\a(U') = \lambda(U')$, so $\sigma_z(U', \Phi') =0$.
By bordism invariance, $\sigma_z(U,\Phi)=0$, which completes the proof of
the claim.

Now the independence of $\sigma(M, \Phi, z)$ on the choice of $W$ follows from
\[ 0 = \sigma_z(U, \Phi) = \sigma(W^+, \Phi^+, z) - \sigma(W^-, \Phi^-, z). \]
\end{proof}

Now that we have constructed an invariant, we need to relate it to the Levine-Tristram signatures.
Recall that $L$ is an oriented link, that $M_L$ is the Seifert surgery, and that we constructed a canonical map $\phi \colon M_L \to S^1$, well-defined up to homotopy.

Let $\operatorname{Lk}_L$ be the linking matrix of the link~$L$ in the Seifert framing, that
is the entry~$(\operatorname{Lk}_L)_{ij}$ is the linking number~$\lk(L_i, L_j)$ between the components
~$L_i$ and $L_j$ if $i \neq j$, and the Seifert framing of~$L_i$ if $i=j$.
The sum $\sum_{i} [\ell_i]$ in $H_1(X_L;\Z)$ of the Seifert framed longitudes vanishes.  Note that \[[\ell_i] = \sum_{j} \lk(L_i,L_j) [\mu_j] \in H_1(X_L;\Z) \cong \Z\langle\mu_i \mid i=1,\dots,n \rangle,\]
where $\mu_i$ is a meridian of the $i$--th component of $L$. We then have
\[0 = \sum_{i} [\ell_i] =  \sum_i\sum_{j} \lk(L_i,L_j) [\mu_j] =  \sum_j\sum_{i} \lk(L_i,L_j) [\mu_j],\]
from which it follows that $\sum_{i} \lk(L_i,L_j)=0$ for every $j=1,\dots,n$. That is, the sum of the entries in each row and in each column of the matrix $\operatorname{Lk}_L$ is zero.  We will use this observation in the proof below.

\begin{lemma}\label{lemma:identification}
Suppose that $z \in S^1 \sm\{1\}$ and let $\phi \colon M_L \to S^1$ be the map defined at the beginning of this section.  Then we have
	\[\sigma(M_L,\phi,z) = \sigma_L(z) -\sigma( \operatorname{Lk}_L ).\]
\end{lemma}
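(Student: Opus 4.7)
The plan is to exhibit a specific 4-manifold $W$ with $\partial W = M_L$ and a map $\Phi \colon W \to B\Z$ extending $\phi$, and then to read off both $\sigma(W)$ and $\sigma(\lambda_\a(W))$ from this model. I would start with the trace of Seifert surgery $W_L := D^4 \cup_L \bigsqcup_{i=1}^m h_i$, where $h_i$ is a 2-handle attached along $L_i$ with its Seifert framing. A direct handle computation gives $\partial W_L = M_L$, and the intersection form on $H_2(W_L;\Z) \cong \Z\langle [C_1],\dots,[C_m]\rangle$, with $C_i$ the core of $h_i$, is represented precisely by $\operatorname{Lk}_L$, so $\sigma(W_L) = \sigma(\operatorname{Lk}_L)$. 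Since $\pi_1(W_L) = 0$, however, the nontrivial map $\phi$ does not extend to $W_L$; I would therefore modify $W_L$ so as to introduce a $\Z$ in $\pi_1$ without disturbing the boundary.

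Form a closed oriented surface $\Sigma \subset W_L$ by taking the Seifert surface $F \subset X_L \subset \partial W_L$, extending it through each attaching region $\nu L_i \subset \partial D^4$ by an annulus to the core circle $L_i$, and capping off with the core disc $C_i$. Intersecting with the cocores of the $h_i$ shows $[\Sigma] = \sum_i [C_i] \in H_2(W_L;\Z)$, and the self-intersection satisfies
\[ [\Sigma]^2 = \sum_{i,j}(\operatorname{Lk}_L)_{ij} = 0 \]
because every row of $\operatorname{Lk}_L$ sums to zero, as observed immediately before the statement of the lemma. Thus $\Sigma$ has trivial normal bundle $\nu \Sigma \cong \Sigma \times D^2$. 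Pushing the portion of $\Sigma$ lying in $\partial W_L$ slightly into the interior of $W_L$ and deleting a tubular neighbourhood (with an appropriate collar reglued along $\partial W_L$) produces a 4-manifold $W$ with $\partial W = M_L$, fundamental group $\pi_1(W) = \Z$ generated by a meridian of $\Sigma$, and an extension $\Phi \colon W \to S^1$ classifying that meridian; on the boundary $\Phi$ agrees with $\phi$ because in $M_L$ the meridian of $\Sigma$ coincides with the meridian of $\hat F = F \cup \bigsqcup_i D_i^2$, which $\phi$ sends to a generator of $\pi_1(S^1)$.

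Two signature computations would then complete the proof. A Mayer--Vietoris argument for the decomposition $W_L = W \cup \nu \Sigma$ identifies the untwisted intersection form on $H_2(W;\Z)$ with the one induced by $\operatorname{Lk}_L$ on $[\Sigma]^\perp / \langle [\Sigma] \rangle$; since $[\Sigma]$ is isotropic this operation preserves signature, yielding $\sigma(W) = \sigma(\operatorname{Lk}_L)$. Over $\C^\a$ the class $[\Sigma]$ is no longer in the radical (its meridian acts as $z \ne 1$), and an infinite-cyclic-cover computation parallel to the one in the proof of Theorem~\ref{theorem:nullity-invariance}---now with a lift of $F$ serving as the Mayer--Vietoris splitting surface of $W$---identifies $\lambda_\a(W)$ with the Hermitian form represented by $(1-z)V + (1-\ol z)V^\transpose = B(z)$, giving $\sigma(\lambda_\a(W)) = \sigma_L(z)$. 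Subtracting yields the formula. The main obstacle I expect is the last identification $\lambda_\a(W) \cong B(z)$: tracking a basis of $H_1(F;\Z)$ through the modification of $W_L$ takes some care in the multi-component case, and this is precisely the step where the correction term $\sigma(\operatorname{Lk}_L)$ enters non-trivially.
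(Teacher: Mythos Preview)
Your construction has a genuine gap. Once you push the closed surface $\Sigma$ into the interior of $W_L$ and delete an open tubular neighbourhood $\nu\Sigma \cong \Sigma \times D^2$, the resulting $4$-manifold has boundary
\[
\partial\bigl(W_L \sm \nu\Sigma\bigr) \;=\; M_L \ \sqcup\ \Sigma \times S^1,
\]
not $M_L$ alone. Regluing a collar of $\partial W_L$ does nothing to remove the new component $\Sigma\times S^1$, so as written your $W$ is not a filling of $(M_L,\phi)$ and cannot be fed into the definition of $\sigma(M_L,\phi,z)$. Everything downstream --- the identification of $\lambda_\a(W)$ with $B(z)$, and the untwisted computation via $[\Sigma]^\perp/\langle[\Sigma]\rangle$ --- is contingent on first fixing this.

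The natural repair is to cap off $\Sigma\times S^1$ with $H\times S^1$, where $H$ is a handlebody with $\partial H = \Sigma$; capping with $\Sigma\times D^2$ instead would kill the $\Z$ you just created in $\pi_1$. But once you do this you have essentially rebuilt the paper's $4$-manifold
\[
W_F \;=\; (D^4 \sm \nu F)\ \cup_{F\times S^1}\ H\times S^1,
\]
since $W_L\sm\nu\Sigma$ differs from $D^4\sm\nu F$ only by the collars $h_i\sm\nu C_i \cong D^2\times S^1\times I$. At that point the two arguments diverge only in bookkeeping: the paper quotes \cite{Ko:1989-1} and \cite{Cochran-Orr-Teichner:2002-1} for $\lambda_\a(W_F)\cong B(z)$, and computes $\sigma(W_F)=\sigma(\operatorname{Lk}_L)$ via Wall non-additivity and a Maslov index calculation, whereas you propose to do both computations by hand via Mayer--Vietoris. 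Your untwisted argument through $[\Sigma]^\perp/\langle[\Sigma]\rangle$ is appealing, but note that after gluing in $H\times S^1$ the second homology acquires new classes (from $H_1(H)\otimes H_1(S^1)$), so the form is no longer simply the one induced from $\operatorname{Lk}_L$ on a subquotient; this is exactly why the paper resorts to Wall's formula.
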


\begin{proof}
Construct a $4$-manifold with boundary $M_L$ as follows.  Let $F$ be a connected Seifert surface for $L$.
Push the Seifert surface into $D^4$ and consider its complement~$V_F := D^4 \sm \nu F$.
Note that if we cap $F$ off with $m$ 2-discs, we obtain a closed surface.
Let $H$ be a 3-dimensional handlebody whose boundary is this surface.
Note that $\partial V_F = X_L \cup F \times S^1$.  Then define
\[W_F := V_F \cup_{F \times S^1} H \times S^1.\]
Note that $\partial W_F = M_L$.
By \cite[pp.~538-9]{Ko:1989-1} and \cite[Lemma~5.4]{Cochran-Orr-Teichner:2002-1},
we have that $\lambda_z(W_F) = (1-z)V + (1-\ol{z})V^\transpose$.

Now we show that~$\sigma(W_F) = \sigma(\operatorname{Lk}_L)$.
For this we use Wall's additivity formula~\cite{Wall69} for the signature.
We follow the notation of~\cite[Section 2.3]{Conway17}, and ask the reader to consult ibidem.
Consider~$W_F$ as the result of the gluing
\[ W_F = V_F \cup_{F\times S^1} H\times S^1.\]
Write~$\Sigma := \partial F \times S^1$, and observe that $H_1(\partial F \times S^1; \Q) = \Q\langle \mu_1, \ell_1,\dots, \mu_n,\ell_n \rangle$
is generated by a collection of meridians~$\mu_i$ and Seifert--framed longitudes~$\ell_i$  of the $i$--th component, where $i=1,\dots,m$.
For this we consider~$\partial F \times S^1$ as the boundary of the closure of~$\nu L \subset S^3$.
After gluing along $M := F \times S^1$, the remaining boundary is the union of $N_+ := X_L$ and $N_- := \sqcup_i D^2_i \times S^1$ along $\partial N_+ = \partial N_- = \sqcup_i S^1_i \times S^1$,
where the discs~$D^2_i$ are the complement of $F$ in~$\partial H$.
Figure~\ref{fig:Walladditivity} sketches the set-up so far.

\begin{figure}
  \begin{center}
  \includegraphics[width=10cm]{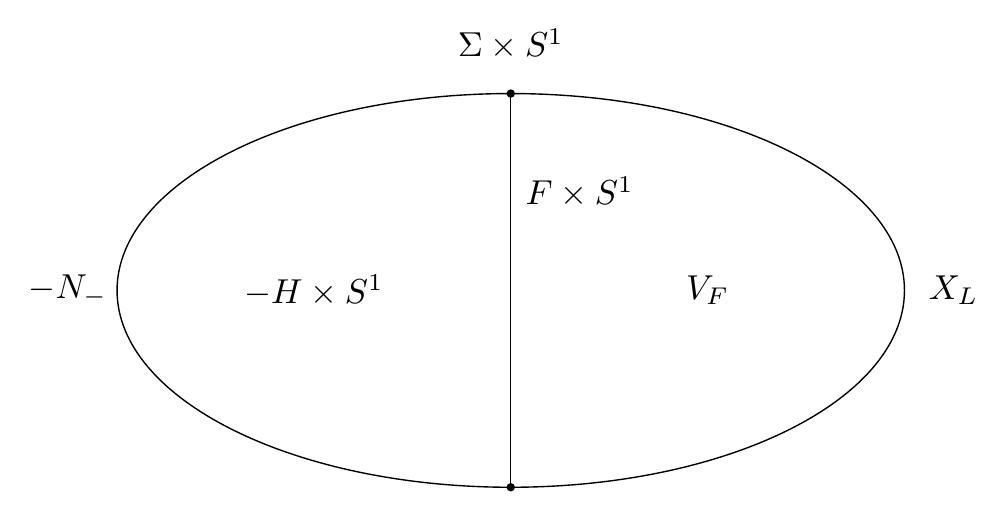}
  \end{center}
  \caption{Set-up for Wall additivity.}
  \label{fig:Walladditivity}
\end{figure}

Now, we compute the kernels \[V_X := \ker \big( H_1(\partial F \times S^1; \Q) \to H_1(X; \Q) \big)\] of the inclusions for $X = M, N_+, N_-$:
\begin{align*}
	V_{N_-} &= \langle \ell_i \mid 1\leq i \leq n \rangle,\\
	V_{N_+} &= \langle \ell_i - \sum_j \lk(L_j, L_i) \mu_j \mid 1\leq i \leq n\rangle,\\
	V_{M} &= \langle \ell_1 + \cdots + \ell_n,  \mu_i - \mu_j \mid 1\leq i< j \leq n \rangle.
\end{align*}
Our convention for computing the Maslov index is to express elements~$\alpha_i \in V_{X_L} = V_{N_+}$
as a sum~$x_i + y_i$ with $x_i \in V_{N_-}$ and $y_i \in V_{M}$, and
then consider the pairing~$\Psi(\alpha_i, \alpha_j) := x_i \cdot y_j$, where the~$\cdot$ is the skew-symmetric
intersection product on the surface~$\Sigma$, with $\mu_i \cdot \ell_j = \delta_{ij}$; c.f~\cite{RanickiMaslov}.  The Maslov index is then the signature of the pairing~$\Psi$.
So let us relate this pairing to the linking matrix.
Note that a suitable decomposition of a basis $\ell_i - \sum_j \lk(L_j, L_i) \mu_j$ for $V_{N_+}$ is as $x_i+y_i$ with $x_i:= \ell_i$ and $y_i= - \sum_j \lk(L_j, L_i) \mu_j$. Here $y_i \in V_M$ because the sum of coefficients $\sum_j \lk(L_j, L_i) =0\in \Z$, by the observation made just before the statement of the lemma, from which it follows that $y_i$ can be expressed as a linear combination of homology classes of the form $\mu_i -\mu_j$.
We then take
$\alpha_i = \ell_i + y_i \in V_{N_-} + V_{M}$,
and $\alpha_j = x_j - \sum_k \lk(L_k, L_j) \mu_k \in V_{N_-} + V_M$, and compute:
\[ \Psi(\alpha_i, \alpha_j) = -\ell_i \cdot \sum_k \lk(L_k, L_j) \mu_k = \lk(L_i, L_j).\]
The Maslov correction term is therefore~$\sigma(\Psi) = \sigma (\operatorname{Lk}_L)$. Together with~$\sigma (V_F) = 0$, this implies that~$\sigma (W_F) = \sigma (\operatorname{Lk}_L)$.

Therefore, we obtain the following equality
\[ \sigma(\lambda_z(W_F))-\sigma(W_F) = \sigma((1-z)V + (1-\ol{z})V^\transpose) - \sigma (\operatorname{Lk}_L) = \sigma(B(z)) -\sigma (\operatorname{Lk}_L) .\]
\end{proof}

\section{Concordance invariance of the signature}\label{section:conc-invariance-signature}

We start with a straightforward lemma, then we prove the final part of the main theorem.
Recall that the complement~$X_L$ and the Seifert surgery~$M_L$ are both equipped
with a homotopy class of a map to $S^1$, or equivalently with a cohomology class.
For the link complement~$X_L$, this class $\xi_L \in H^1(X_L;\Z)$
is characterised by the property that it sends each oriented meridian to $1$.

\begin{lemma}\label{lemma:concordance-implies-hom-cob-of-zero-surgeries}
Let $L$ and $J$ be concordant links.  Their Seifert surgeries $M_L$ and $M_J$ are homology bordant over $S^1$.
\end{lemma}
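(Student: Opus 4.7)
The plan is to enlarge the exterior of the concordance by $4$-dimensional analogues of the Dehn filling tori used to build the Seifert surgeries.

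First, pick a union of $m$ annuli $A \subset S^3 \times I$ realising the concordance and set $W := (S^3 \times I) \setminus \nu A$. A standard Mayer--Vietoris argument, as in \cite[Lemma~2.4]{Friedl-Powell-2014} and already invoked in the proof of Theorem~\ref{theorem:nullity-invariance}, shows that $W$ is a $\Z$-homology cobordism between $X_L$ and $X_J$. In particular $H^1(W;\Z) \cong \Z^m$, and the classes $\xi_L \in H^1(X_L;\Z)$ and $\xi_J \in H^1(X_J;\Z)$ sending every meridian to $1$ arise as restrictions of a single class $\xi_W \in H^1(W;\Z)$, since meridians of $L$ and $J$ are identified through the annular components of $A$.

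Second, I extend $W$ to a $4$-manifold between $M_L$ and $M_J$. The complement $\partial W \setminus (X_L \cup X_J)$ is a disjoint union $\bigsqcup_{i=1}^m T_i^2 \times I$, one piece for each component of $A$. Under the identification of the two ends coming from $A_i$, the Seifert longitudes satisfy
\[ [\ell_i^L] = \sum_j \lk(L_i,L_j)[\mu_j] \quad \text{and} \quad [\ell_i^J] = \sum_j \lk(J_i,J_j)[\mu_j], \]
and since pairwise linking numbers are concordance invariants, $\ell_i^L$ and $\ell_i^J$ represent the same homology class on $T_i^2$. Being homologous essential simple closed curves on the torus they are isotopic, and the track of any such isotopy is a longitudinal annulus $\Lambda_i \subset T_i^2 \times I$. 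Now glue $\bigsqcup_{i=1}^m (D^2 \times S^1) \times I$ onto $W$ along $\bigsqcup_i T_i^2 \times I$, identifying $\partial D^2 \times \{*\} \times I$ with $\Lambda_i$ and $\{*\} \times S^1 \times I$ with the meridional annulus. Call the result $\overline{W}$. By the very definition of the Seifert surgery, $\partial \overline{W} = M_L \sqcup -M_J$.

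Third, I verify the required properties. Combining the formula for $[\ell_i^L]$ with the observation made just before the statement of Lemma~\ref{lemma:identification} that $\sum_j \lk(L_i,L_j) = 0$, the class $\xi_W$ evaluates to zero on each $\ell_i^L$ and $\ell_i^J$, and therefore extends across each attached $(D^2 \times S^1) \times I$ to give a class $\overline{\xi} \in H^1(\overline{W};\Z)$ and hence the desired map $\overline{W} \to S^1$ extending $\phi_L$ and $\phi_J$. For the homology statement, Mayer--Vietoris applied to $\overline{W} = W \cup \bigsqcup_i (D^2 \times S^1) \times I$, compared with the Mayer--Vietoris sequences for $M_L = X_L \cup \bigsqcup_i (D^2 \times S^1)$ and $M_J = X_J \cup \bigsqcup_i (D^2 \times S^1)$, combined with the fact that $W$ is a $\Z$-homology cobordism and $(D^2 \times S^1) \times I$ is a product cobordism, yields by the five-lemma that the inclusions $M_L,M_J \hookrightarrow \overline{W}$ are integral homology isomorphisms.

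The main obstacle is producing the longitudinal annuli $\Lambda_i$: one needs the Seifert framings at the two ends of each annular piece of $\partial W$ to be genuinely compatible, which is supplied by the concordance invariance of pairwise linking numbers. Everything else is bookkeeping with Mayer--Vietoris once this geometric compatibility is in place.
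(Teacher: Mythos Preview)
Your proof is correct and follows essentially the same route as the paper: take the concordance exterior $W$, glue in $\bigsqcup_i (D^2 \times S^1) \times I$ along the lateral boundary to obtain a bordism between $M_L$ and $M_J$, extend the map to $S^1$, and conclude with Mayer--Vietoris and the five-lemma. You are in fact more explicit than the paper about why the Seifert framings at the two ends are compatible (via concordance invariance of linking numbers); the paper simply asserts that the gluing can be chosen to restrict correctly, whereas you justify this---though note that your displayed formulas for $[\ell_i^L]$ and $[\ell_i^J]$ live in $H_1(X_L)$ and $H_1(X_J)$ rather than in $H_1(T_i^2)$, so the comparison really goes through the framing integers $-\sum_{j\neq i}\lk(L_i,L_j)$ together with the fact that the zero-framings extend over the concordance annuli.
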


\begin{proof}
Denote the maps to $S^1$ by $\phi_{L} \colon M_{L} \to S^1$ and $\phi_J \colon M_L \to S^1$, and denote the corresponding cohomology
classes by $\xi_{L} \in H^1(M_{L};\Z)$ and $\xi_{J} \in H^1(M_{J};\Z)$.
Define $X_L := S^3 \sm \nu L$ and $X_J := S^3 \sm \nu J$.  Let $A \subset S^3 \times I$
be an embedding of a disjoint union of annuli giving a concordance
between $L$ and $J$.

Fix a tubular neighbourhood $\nu A = A \times D^2$ of the annulus $A$
with a trivialisation.
Denote $W_A:= S^3 \times I \sm \nu A$, whose boundary consists of the union of $X_L$, $X_J$, and a piece
identified with the total space of the unit sphere bundle~$A \times S^1$ of
$\nu A$. As usual, we refer to a representative $\{\pt\}\times S^1$ for the
$S^1$ factor in $A \times S^1$ as a \emph{meridian} of $A$. Note that the
inclusions $X_L \subset W_A$ and $X_J \subset W_A$ map the meridians in the
link complements to the meridians in $W_A$.

\begin{claim}
  There exists a cohomology class $\xi_A \in H^1(W_A;\Z)$ mapping each meridian~$\mu_A$ of $A$ to $1$.
\end{claim}
This can be seen by the Mayer-Vietoris sequence
\[ H^1(\nu A; \Z) \oplus H^1(W_A;\Z) \to H^1(\partial \nu A;\Z) \to H^2(S^3 \times I;\Z)=0, \]
in which the map $H^1(\nu A;\Z) \cong \Z^m \to H^1(\partial \nu A;\Z) \cong (\Z \oplus \Z)^m$ is given by $1 \mapsto (1,0)$ on each of the $m$ summands. That is, the homology classes of the meridians of $\partial \nu A \cong A \times S^1$ do not lie in the image of this surjective map, so they must lie in the image of $H^1(W_A;\Z)$.  This completes the proof of the claim.

It follows that $\xi_A$ is pulled back to the unique classes~$\xi_L$ and $\xi_J$ that map
the meridians in the link complements to $1$.
Using the natural isomorphism between the functors $[-, S^1]$ and $H^1(-;\Z)$,
find a map $\phi_W \colon W_A \to S^1$ that restricts to the prescribed map
$\phi_L \sqcup \phi_J \colon X_{L} \sqcup X_J \to S^1$
on the boundary.

Up to isotopy, there is a unique product structure on an annulus~$A = S^1 \times I$.
Having fixed such a structure, we consider the manifold
\[Y := W_A \cup_{A \times S^1} \bigsqcup^m (D^2 \times S^1 \times I).\]
The gluing is done in such a way as to restrict on $\bigsqcup^m S^1 \times S^1 \times \{i\}$, for $i=0,1$,
to the gluing of the Seifert surgery on $X_L$ and $X_J$. By construction, this gives a bordism
between $M_L$ and $M_J$.

Note that the map $\phi_W$ and the projection $A\times S^1 \to S^1$ glue together to give a map $\phi_Y \colon Y \to S^1$.
Equipped with this map, $(Y,\phi_Y)$ is an $S^1$-bordism between $(M_L, \phi_L)$ and $(M_J, \phi_J)$.

Finally, we assert that $Y$ is a homology bordism.
To see this, first observe, as in the proof of Theorem~\ref{theorem:nullity-invariance}, that $W_A$ is a homology bordism from $X_L$ to $X_J$.
 Flagrantly, $A \times S^1$ is a homology bordism from $S^1 \times S^1$ to itself, and $\bigsqcup^m (D^2 \times S^1 \times I)$ is a homology bordism from $\bigsqcup^m D^2 \times S^1$ to itself.
Gluing two homology bordisms together along a homology bordism, with the same maps on homology induced by the gluings
for $M_L$, $M_J$ and $Y$, it follows easily from the Mayer-Vietoris sequence
and the five lemma that $Y$ is a homology bordism.
\end{proof}

\begin{theorem}\label{theorem:signature-invariance}
Suppose that oriented $m$-component links $L$ and $J$ are concordant and that $z \in S^1 \sm \{1\}$ is not a Knotennullstelle.  Then $\sigma_L(z) = \sigma_J(z)$.
\end{theorem}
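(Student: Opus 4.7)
The plan is to combine Lemma~\ref{lemma:identification} with Lemma~\ref{lemma:concordance-implies-hom-cob-of-zero-surgeries}, reducing the theorem to showing that the signature defect $\sigma(M_L,\phi_L,z)$ is invariant under $\Z$-homology bordism over $S^1$, given that $z$ is not a Knotennullstelle.

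First I would show that $\sigma(\operatorname{Lk}_L) = \sigma(\operatorname{Lk}_J)$. The off-diagonal entries $\operatorname{lk}(L_i,L_j)$ are classical concordance invariants, and by the row-sum identity derived just before Lemma~\ref{lemma:identification}, the diagonal Seifert-framing entry of $L_i$ equals $-\sum_{j\neq i}\operatorname{lk}(L_i,L_j)$, and so is determined by the off-diagonal entries. Hence the whole linking matrix is a concordance invariant, and by Lemma~\ref{lemma:identification} it suffices to prove $\sigma(M_L,\phi_L,z) = \sigma(M_J,\phi_J,z)$.

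Next I would apply Lemma~\ref{lemma:concordance-implies-hom-cob-of-zero-surgeries} to obtain a $\Z$-homology bordism $(Y,\phi_Y)$ over $S^1$ from $(M_L,\phi_L)$ to $(M_J,\phi_J)$. Choose any $4$-manifold $W_J$ bounding $M_J$ together with a map $\Phi_J \colon W_J \to B\Z$ extending $\phi_J$; such $W_J$ exists because $\Omega_3(B\Z) = 0$. Set $W_L := Y \cup_{M_J} W_J$, which bounds $M_L$, with induced map $\Phi_L = \phi_Y \cup \Phi_J \colon W_L \to B\Z$ extending $\phi_L$. Novikov additivity applied to both the ordinary and the $\C^\alpha$-twisted intersection forms then yields
\[ \sigma(W_L) = \sigma(Y) + \sigma(W_J), \qquad \sigma(\lambda_\alpha(W_L)) = \sigma(\lambda_\alpha(Y)) + \sigma(\lambda_\alpha(W_J)). \]

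The main obstacle, and the point at which the hypothesis on $z$ is essential, is to show that both signatures of $Y$ vanish. For the ordinary signature, since $Y$ is an integral homology bordism, the inclusions $M_L, M_J \hookrightarrow Y$ induce isomorphisms on integral homology, so $H_2(Y;\Z)$ coincides with the image of $H_2(\partial Y;\Z)$ and the intersection form of $Y$ descends to the zero form, giving $\sigma(Y) = 0$. For the $\C^\alpha$-twisted version, I would apply Lemma~\ref{lemma:Z-hom-iso-implies-twisted-iso} to the inclusions $M_L, M_J \hookrightarrow Y$, viewed as maps of finite CW complexes over $S^1$: since $z$ is not a Knotennullstelle, the integral homology equivalences upgrade to $\C^\alpha$-homology equivalences. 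Hence $H_2(Y;\C^\alpha)$ is the image of $H_2(\partial Y;\C^\alpha)$, and so $\lambda_\alpha(Y)$ vanishes on the relevant quotient, forcing $\sigma(\lambda_\alpha(Y)) = 0$. Subtracting the two Novikov identities then gives
\[ \sigma(M_L,\phi_L,z) = \sigma(\lambda_\alpha(W_L)) - \sigma(W_L) = \sigma(\lambda_\alpha(W_J)) - \sigma(W_J) = \sigma(M_J,\phi_J,z), \]
which combined with the equality of linking-matrix signatures and Lemma~\ref{lemma:identification} completes the proof.
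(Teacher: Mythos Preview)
Your proof is correct and follows essentially the same approach as the paper: both glue the homology bordism from Lemma~\ref{lemma:concordance-implies-hom-cob-of-zero-surgeries} to a null-bordism of $M_J$, then use Lemma~\ref{lemma:Z-hom-iso-implies-twisted-iso} to control the $\C^\alpha$-twisted homology of the bordism and conclude via Lemma~\ref{lemma:identification}. The only cosmetic difference is that you invoke Novikov additivity explicitly and show $\sigma(\lambda_\alpha(Y))=0$, whereas the paper argues directly that the inclusion $W_J \hookrightarrow W_L$ induces an isometry $H_2(W_J;\C^\alpha)/\operatorname{im}H_2(M_J;\C^\alpha) \cong H_2(W_L;\C^\alpha)/\operatorname{im}H_2(M_L;\C^\alpha)$; these are two packagings of the same computation.
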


\begin{proof}
As in the statement of the theorem, suppose that $z \in S^1 \sm \{1\}$ is not a Knotennullstelle.
Let $W_{LJ}$ be a homology bordism between the Seifert surgeries $M_L$ and $M_J$,
whose existence is guaranteed by Lemma~\ref{lemma:concordance-implies-hom-cob-of-zero-surgeries}.
Let $W_J$ be a 4-manifold that gives a null-bordism of $M_J$ over $B\Z$, and define $W_L := W_{LJ} \cup_{M_J} W_J$.

The signature of the intersection form on $H_2(W_L;\C^a)/H_2(M_L;\C^\a)$, together with the ordinary signature over $\Z$, determines the signature $\sigma_L(z)$ by Section~\ref{section:identification-signatures}.  Similarly, the signature of the intersection form on the quotient $H_2(W_J;\C^\a)/H_2(M_J;\C^\a)$ and the ordinary signature of $W_J$ determine the signature $\sigma_J(z)$.
By Lemma~\ref{lemma:Z-hom-iso-implies-twisted-iso}, we have homology isomorphisms $$H_2(M_L;\C^\a) \toiso H_2(W_{LJ};\C^\a) \text{ and } H_2(M_J;\C^\a) \toiso H_2(W_{LJ};\C^\a).$$  It follows that every class in $H_2(W_L;\C^\a)$ has a representative in $W_J$, that $$H_2(W_L;\C^a)/H_2(M_L;\C^\a) \cong H_2(W_J;\C^a)/H_2(M_J;\C^\a),$$ and that this isomorphism induces an isometry of the intersection forms. Thus the twisted signatures of both intersection forms are equal.  We needed that $z$ is not a Knotennullstelle in order to apply Lemma~\ref{lemma:Z-hom-iso-implies-twisted-iso} in the preceding argument.
The same argument over $\Z$ implies that the ordinary signatures also coincide, that is $\sigma(W_L) = \sigma(W_J)$.  Therefore $\sigma(M_L,\phi_L,z) = \sigma(M_J,\phi_J,z)$.  Note that the linking number is a concordance invariant and therefore the linking matrices
agree~$\operatorname{Lk}_L = \operatorname{Lk}_{J}$.
Therefore $\sigma(M_L,\phi_L,z) + \sigma(\operatorname{Lk}_L) = \sigma(M_J,\phi_J,z) + \sigma(\operatorname{Lk}_J)$, and so $\sigma_L(z) = \sigma_J(z)$ by Lemma~\ref{lemma:identification}.  Thus the Levine-Tristram signature at $z$ is a concordance invariant, as desired.
\end{proof}

\bibliographystyle{alpha}
\bibliography{research}
\end{document}